\documentclass[11pt,a4paper]{amsart}
\usepackage[T1]{fontenc}
\usepackage[utf8]{inputenc}
\usepackage{amsmath,amssymb}
\usepackage{enumerate}
\usepackage[a4paper,margin=2.8cm]{geometry}
\usepackage[colorlinks=true,linkcolor=blue,citecolor=blue]{hyperref}
\hypersetup{
  pdftitle={Milnor fibrations on rho-tubes and rho-spheres for real analytic map germs},
  pdfauthor={Gabriel França, Maico Ribeiro, Mateus de Melo},
  pdfsubject={Singularity theory; Milnor fibrations},
  pdfkeywords={Milnor fibration; rho-regularity; Milnor condition (b); real analytic map germs; open book decompositions}
}

\newtheorem{theorem}{Theorem}[section]
\newtheorem{proposition}[theorem]{Proposition}
\newtheorem{lemma}[theorem]{Lemma}
\newtheorem{corollary}[theorem]{Corollary}
\theoremstyle{definition}
\newtheorem{definition}[theorem]{Definition}
\newtheorem{example}[theorem]{Example}
\theoremstyle{remark}
\newtheorem{remark}[theorem]{Remark}

\DeclareMathOperator{\Sing}{Sing}
\DeclareMathOperator{\Disc}{Disc}
\DeclareMathOperator{\rank}{rank}
\DeclareMathOperator{\spann}{span}
\newcommand{\R}{\mathbb{R}}
\newcommand{\norm}[1]{\lVert #1\rVert}
\newcommand{\clo}[1]{\overline{#1}}

\title[Milnor fibrations on $\rho$-tubes and $\rho$-spheres]{Milnor fibrations on $\rho$-tubes and $\rho$-spheres for real analytic map germs}

\author{Gabriel Fran\c{c}a}
\address{Departamento de Matem\'atica, Universidade Federal do Esp\'irito Santo, Vit\'oria-ES, Brazil}
\email{gabriel.santos.78@ufes.br}

\author{Maico Ribeiro}
\address{Departamento de Matem\'atica, Universidade Federal do Esp\'irito Santo, Vit\'oria-ES, Brazil}
\email{maico.ribeiro@ufes.br}

\author{Mateus de Melo}
\address{Departamento de Matem\'atica, Universidade Federal do Esp\'irito Santo, Vit\'oria-ES, Brazil}
\email{mateus.melo@ufes.br}

\subjclass[2020]{Primary 32S55; Secondary 57R45, 58K05, 58K15, 14P15}
\keywords{Milnor fibration; $\rho$-regularity; Milnor condition (b); real analytic map germs; open book decompositions}

\begin{document}

\begin{abstract}
For a real analytic map germ $G:(\R^m,0)\to(\R^p,0)$, we study Milnor fibrations obtained by replacing the squared Euclidean distance with an analytic control function $\rho$ defining the origin. We formulate the corresponding Milnor set and condition (b), derive criteria for tube and sphere fibrations, and examine their dependence on $\rho$. The family $G_\rho=(xz,\,yz\rho)$ provides explicit changes of regularity under elliptic controls. In particular, the germ
$G(x,y,z)=(xz,\,yz(10x^2+y^2+3z^2))$
admits neither the induced tube nor sphere fibration for the Euclidean control, while both fibrations exist for the adapted function $\rho=10x^2+y^2+3z^2$. Thus the control function can be essential to the local fibration structure.
\end{abstract}

\maketitle

\section{Introduction}\label{sec:intro}

Milnor's fibration theorem is a cornerstone for studying the local topology of a holomorphic function germ near a singular point \cite{Mil68}. It associates with each germ a locally trivial fibration, described either on a sphere or on a tube, making the existence and topology of this structure an invariant of the germ. 

For real analytic map germs
\[
G:(\mathbb{R}^m,0)\longrightarrow(\mathbb{R}^p,0),\qquad m\ge p\ge2,
\]
the existence of a Milnor-type fibration is no longer guaranteed, which has led to a broad theory (for a comprehensive survey, see \cite{Seade19}; see also \cite{ACT13,ART10,ARSRT19,ARSRT20,CMSS09,CSS10,CSS12,Le77,Loo84,Massey10,Pichon05,PichonSeade08,Quick22,RSV02,RS05,Seade97}). Regularity conditions such as Thom's $(a_G)$-regularity, the Milnor condition (b), $(d)$-regularity, and $\rho$-regularity were introduced to control the transversality needed in this setting \cite{ART10,ARSRT19,Bek91,CSS10,CSS12,Mather12,Thom69}.

These conditions are usually formulated with the squared Euclidean distance $\rho_E(x)=\norm{x}^2$. Its level sets provide the round spheres on which the local geometry is examined. There is, however, no intrinsic reason to privilege this particular function. Ellipsoids and more general level hypersurfaces may be better adapted to the geometry of a given germ, especially when its variables occur with different weights or degrees.

This point of view was developed in \cite{RST24}, where Ribeiro, Santamaria and da Silva considered $\rho$-regularity with respect to a general function defining the origin and raised the question of how the usual existence results should be reformulated. The purpose of the present paper is to pursue that question for both standard realizations of the Milnor fibration. We use the levels and sublevels of an analytic control function $\rho$ in the source, while retaining the Euclidean structure in the target, and formulate the corresponding transversality conditions for the tube and for the projection $\Psi_G=G/\norm G$ on the sphere.

A central role is played by the family
\[
G_\rho(x,y,z)=\bigl(xz,\,yz\rho(x,y,z)\bigr).
\]
It is simple enough to permit explicit calculations, but flexible enough to display a genuine dependence on the control function. Comparing different elliptic controls leads to a quantitative analysis of when transversality is preserved or lost. Related quasi-homogeneous examples show how resonances appear in the Euclidean setting, while Euler identities help locate the possible limiting tangencies.

The same family also makes it possible to compare tube and sphere phenomena within a single framework. On the sphere side, the relevant object is the angular projection $G/\norm G$, and the problem becomes one of transversality to the levels of $\rho$. The germ
\[
G(x,y,z)=\bigl(xz,\,yz(10x^2+y^2+3z^2)\bigr)
\]
illustrates the geometric distinction particularly clearly: the Euclidean control produces both radial and angular obstructions, whereas the adapted function $\rho=10x^2+y^2+3z^2$ is compatible with the geometry of the germ. This example emphasizes that the control function is not merely a convenient way to describe a neighborhood of the origin; it may be part of the fibration problem itself.

Our main results are structured along three principal lines:
\begin{itemize}
    \item \textbf{Comparison of Elliptic Controls:} For the family $G_\rho$, the validity of condition~(b) under a second control is determined by a sharp critical threshold formed by the ratio of their coefficients.
    \item \textbf{Resonance and Normalization:} For components with higher even powers, obstructions to Euclidean regularity stem from a resonance between exponents and coefficients, revealing this as an artifact of Euclidean normalization rather than an intrinsic property of the exponents.
    \item \textbf{Quasi-Homogeneous Constraints:} Euler relations restrict the accumulation of the Milnor set strictly to the singular locus, bounding where these topological obstructions can arise.
\end{itemize}
Together, these results explain the failure of Euclidean control for $G(x,y,z)=(xz,\,yz(10x^2+y^2+3z^2))$, where non-transversality persists over an open set of coefficients, while the adapted control successfully restores both tube and sphere fibrations.

The paper is organized as follows. Section \ref{sec:tube} introduces control functions, $\rho$-tubes, the associated Milnor set and condition (b), together with basic examples. Section \ref{sec:family} studies the family $G_\rho$ under changes of elliptic control. Section \ref{sec:qh} discusses quasi-homogeneous germs and the role of resonance and Euler identities. Section \ref{sec:sphere} develops the fibration on a $\rho$-sphere through the projection $G/\norm G$. Section \ref{sec:main} analyzes the explicit example above and the corresponding family of coefficient patterns.

\section{\texorpdfstring{$\rho$}{rho}-tubes and the Milnor condition (b)}\label{sec:tube}

\subsection{Control functions}\label{subsec:control}

Let \(U\subset\mathbb{R}^m\) be an open neighborhood of the origin. A smooth
function \(\rho\colon U\to[0,\infty)\) \emph{defines the origin} if:
\begin{enumerate}
\item[(a)] \(\rho^{-1}(0)=\{0\}\);
\item[(b)] there exists \(\varepsilon_0>0\) such that
\(\rho^{-1}([0,\varepsilon])\) is compact in \(U\) for every
\(0<\varepsilon\leq\varepsilon_0\);
\item[(c)] \(0\) is the only critical point of \(\rho\) in
\(\rho^{-1}([0,\varepsilon_0])\).
\end{enumerate}
Thus every \(\varepsilon\in(0,\varepsilon_0]\) is a regular value. Set
\[
B^m_{\rho,\varepsilon}:=\{\rho<\varepsilon\},\qquad
\overline{B}^m_{\rho,\varepsilon}:=\{\rho\leq\varepsilon\},\qquad
S^{m-1}_{\rho,\varepsilon}:=\rho^{-1}(\varepsilon).
\]
These are the open and closed \(\rho\)-balls and the \(\rho\)-sphere,
respectively. In particular, \(S^{m-1}_{\rho,\varepsilon}\) is a compact
smooth hypersurface and
\[
\partial\overline{B}^m_{\rho,\varepsilon}
=S^{m-1}_{\rho,\varepsilon}.
\]

The choice \(\rho(x)=\|x\|^2\) gives the usual Euclidean balls and spheres.
Positive-definite quadratic forms give ellipsoids, while sums
\(\sum_i x_i^{2a_i}\) give superellipsoids. More generally, every homogeneous
polynomial \(\rho\) that is positive away from the origin defines the origin,
since Euler's identity gives
\[
\langle\nabla\rho(x),x\rangle=(\deg\rho)\rho(x)>0
\qquad (x\neq0).
\]

\begin{theorem}[Topology of \(\rho\)-spheres]
Let \(\rho\colon U\to[0,\infty)\) define the origin. Then:
\begin{enumerate}[\rm(i)]
\item all \(S^{m-1}_{\rho,\varepsilon}\), with
\(0<\varepsilon\leq\varepsilon_0\), are smoothly diffeomorphic;
\item \(S^{m-1}_{\rho,\varepsilon}\) is homeomorphic to \(S^{m-1}\), and is
smoothly diffeomorphic to \(S^{m-1}\) whenever \(m\neq5\).
\end{enumerate}
\end{theorem}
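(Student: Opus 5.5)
Part (i) follows from standard Morse theory on the compact region $\rho^{-1}([\varepsilon_1,\varepsilon_2])$ with $0<\varepsilon_1<\varepsilon_2\le\varepsilon_0$. This region is compact because it is a closed subset of the compact set $\overline{B}^m_{\rho,\varepsilon_2}$. By condition (c), $\nabla\rho\neq0$ on it, so the vector field
\[
X=\frac{\nabla\rho}{\norm{\nabla\rho}^2}
\]
is smooth there and satisfies $X\rho=1$. I would extend $X$ by a cutoff to a compactly supported field on a neighborhood, take its flow $\phi_t$, and observe that $\rho(\phi_t(x))=\rho(x)+t$ as long as the trajectory stays in the region. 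Then $\phi_{\varepsilon_2-\varepsilon_1}$ restricts to a diffeomorphism $S^{m-1}_{\rho,\varepsilon_1}\to S^{m-1}_{\rho,\varepsilon_2}$, which proves (i). The same flow also shows that $\overline{B}^m_{\rho,\varepsilon_2}\setminus B^m_{\rho,\varepsilon_1}\cong S^{m-1}_{\rho,\varepsilon_1}\times[\varepsilon_1,\varepsilon_2]$.

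For (ii), I plan to compare the $\rho$-sphere with a small round sphere. By (a) and (b), the compact sets $\overline{B}^m_{\rho,\varepsilon}$ shrink to $\{0\}$ as $\varepsilon\to0$. Indeed, they are nested with intersection $\rho^{-1}(0)=\{0\}$, so by compactness they eventually lie inside any neighborhood of $0$. Conversely, $\rho$ is positive and continuous on any small round sphere $S_r$, so it has a positive minimum $\delta$ there, and hence $\overline{B}^m_{\rho,\varepsilon}\subset B_r$ whenever $\varepsilon<\delta$.

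Now choose $\varepsilon<\varepsilon_0$, then $r$ with $B_r\subset B^m_{\rho,\varepsilon}$, then $\varepsilon'<\varepsilon$ with $\overline{B}^m_{\rho,\varepsilon'}\subset B_r$. The region $W=\overline{B}^m_{\rho,\varepsilon}\setminus B^m_{\rho,\varepsilon'}$ is a product cobordism by part (i), and the round sphere $S_r$ sits inside $W$ separating its two boundary components. The next step is to conclude that the region $V$ between $S_r$ and $S^{m-1}_{\rho,\varepsilon}$ is an $h$-cobordism. By the product structure of $W$, the inclusion of $S_r$ into $W$ is a homotopy equivalence; the precise argument is the standard ``sandwich'' trick, using that the composite of inclusions $S^{m-1}_{\rho,\varepsilon'}\hookrightarrow V'\hookrightarrow W$ is a homotopy equivalence, where $V'$ is the region between $S^{m-1}_{\rho,\varepsilon'}$ and $S_r$.

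A simpler route for the homeomorphism statement would be to observe that $\overline{B}^m_{\rho,\varepsilon}$ is a compact contractible manifold whose complement of the origin retracts onto the boundary. Then the boundary has the homotopy type of $S^{m-1}$: it is a homotopy sphere by the above and by $\overline{B}^m_{\rho,\varepsilon}\setminus\{0\}\simeq\R^m\setminus\{0\}$ locally. The generalized Poincaré conjecture (Smale, Freedman, Perelman) then gives a homeomorphism with $S^{m-1}$.

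For smoothness, I would prefer the $h$-cobordism route. The region $V$ is a smooth $h$-cobordism between $S_r\cong S^{m-1}$ and $S^{m-1}_{\rho,\varepsilon}$, and it is simply connected when $m\ge3$. For $m\ge6$, Smale's $h$-cobordism theorem makes $V$ a product, so the two boundary spheres are diffeomorphic. Small dimensions are handled separately:
\begin{itemize}
\item for $m\le3$, homeomorphic surfaces and curves are diffeomorphic;
\item for $m=4$, one uses Cerf's theorem that $\Gamma_4=0$, so every homotopy $3$-sphere, being homeomorphic to $S^3$, is diffeomorphic to it;
\item $m=5$ is excluded exactly because the smooth $4$-dimensional $h$-cobordism theorem is unavailable.
\end{itemize}

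The main obstacle is the bookkeeping in the sandwich argument: showing rigorously that $V$ is an $h$-cobordism, i.e.\ that both inclusions $\partial_\pm V\hookrightarrow V$ are homotopy equivalences. I would handle this by noting that $V\cup V'=W$ deformation retracts to either end. Then $S_r\hookrightarrow V$ has a homotopy left inverse, and a homology and $\pi_1$ computation via Mayer--Vietoris and van Kampen, followed by Whitehead's theorem, upgrades it to a homotopy equivalence.
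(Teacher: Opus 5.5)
Your part (i) is the paper's argument. For (ii) you take a genuinely different route, and it can be completed.

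\emph{Comparison with the paper.} The paper first proves that $\overline{B}^m_{\rho,\varepsilon}$ is contractible by flowing backward all the way into the origin, $H(x,s)=\phi_{-s\rho(x)}(x)$, with continuity at $0$ coming from condition (b). It then gets a simply connected homology sphere from Lefschetz duality and the removal of the interior point. Finally it merely asserts that $\overline{B}^m_{\rho,\varepsilon}$ minus a small round ball is an $h$-cobordism.

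You never flow into the origin. You use only the product collar $W$ and an interleaved round sphere $S_r$, and you verify the $h$-cobordism property of $V$ directly. This gives $S^{m-1}_{\rho,\varepsilon}\simeq S_r$ for free and checks the step the paper leaves implicit. You also make explicit the appeal to the generalized Poincar\'e conjecture (Freedman for $m=5$) that the paper's homeomorphism claim tacitly needs.

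\emph{Where your justification needs repair.} The price of your route is the bookkeeping, and some of your sentences do not justify it as written. The product structure alone does not make $S_r\hookrightarrow W$ a homotopy equivalence; that is an output of the argument, not an input. Likewise, the retraction of $W$ onto an end gives no left inverse of $S_r\hookrightarrow V$. Radial projection $x\mapsto rx/\norm{x}$ does, but you do not need it.

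What actually makes Mayer--Vietoris and van Kampen work is the factorization you mention only in passing. The end inclusions $S^{m-1}_{\rho,\varepsilon}\hookrightarrow W$ and $S^{m-1}_{\rho,\varepsilon'}\hookrightarrow W$ are homotopy equivalences that factor through $V$ and $V'$. Hence $V\to W$ and $V'\to W$ are onto in homology and on $\pi_1$. From this:
\begin{itemize}
\item For $0\le k\le m-2$ (reduced homology), the map $\tilde H_k(V)\oplus\tilde H_k(V')\to\tilde H_k(W)$ is injective because $\tilde H_k(S_r)=0$, and each summand surjects. This forces all three groups to vanish.
\item For $m\ge3$, $\pi_1(W)=\pi_1(V)*\pi_1(V')$ with each factor surjecting, which forces all three groups to be trivial.
\item In degree $m-1$ you get $0\to\mathbb{Z}\to H_{m-1}(V)\oplus H_{m-1}(V')\to\mathbb{Z}\to0$ with both summands surjecting. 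This makes $H_{m-1}(S_r)\to H_{m-1}(V)\to H_{m-1}(W)$ isomorphisms.
\end{itemize}
Whitehead's theorem then applies to both ends of $V$.

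\emph{Two smaller points.} In your ``simpler route'' the contractibility of $\overline{B}^m_{\rho,\varepsilon}$ is assumed, not proved. It does follow: the flow retraction onto $\overline{B}^m_{\rho,\varepsilon'}$ followed by inclusion is homotopic to the identity and factors through the contractible $B_r$. For $m=4$, the smoothing step is Moise's uniqueness of smooth structures on $3$-manifolds, applied after Perelman. It is not Cerf's $\Gamma_4=0$, which concerns diffeomorphisms of $S^3$.
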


\begin{proof}
On \(\overline{B}^m_{\rho,\varepsilon_0}\setminus\{0\}\), consider
\[
X=\frac{\nabla\rho}{\|\nabla\rho\|^2}.
\]
Since \(d\rho(X)=1\), its flow \(\phi_t\) satisfies
\[
\rho(\phi_t(x))=\rho(x)+t.
\]
Consequently,
\[
\phi_{\varepsilon_2-\varepsilon_1}\colon
S^{m-1}_{\rho,\varepsilon_1}\longrightarrow
S^{m-1}_{\rho,\varepsilon_2}
\]
is a diffeomorphism, proving (i).

The negative flow contracts \(\overline{B}^m_{\rho,\varepsilon}\) to the
origin:
\[
H(x,s)=
\begin{cases}
\phi_{-s\rho(x)}(x),&x\neq0,\\
0,&x=0.
\end{cases}
\]
Indeed, \(\rho(H(x,s))=(1-s)\rho(x)\), and condition~(b) ensures continuity
at \(0\). Hence \(\overline{B}^m_{\rho,\varepsilon}\) is contractible.

Lefschetz duality and the long exact sequence of the pair
$
\bigl(\overline{B}^m_{\rho,\varepsilon},
S^{m-1}_{\rho,\varepsilon}\bigr)
$
show that \(S^{m-1}_{\rho,\varepsilon}\) is an integral homology sphere.
For \(m\geq3\), removing the interior point \(0\) does not change the
fundamental group, and the punctured \(\rho\)-ball retracts onto its boundary.
Thus \(S^{m-1}_{\rho,\varepsilon}\) is simply connected and hence, by
Hurewicz and Whitehead, a homotopy sphere. The cases \(m\leq2\) are immediate.

 If \(m\geq6\), removing a small Euclidean ball around the
origin from \(\overline{B}^m_{\rho,\varepsilon}\) produces an
\(h\)-cobordism between \(S^{m-1}_{\rho,\varepsilon}\) and \(S^{m-1}\);
the \(h\)-cobordism theorem makes it a product. The cases \(m\leq4\) follow
from the classification of smooth manifolds of dimension at most \(3\).
The remaining case \(m=5\) is precisely the unresolved smooth
four-dimensional case.
\end{proof}

\subsection{The fibration on the \texorpdfstring{$\rho$}{rho}-tube}
Let $G:(\R^m,0)\to(\R^p,0)$, $m\ge p\ge2$, be a nonconstant real analytic map germ with representative $G:U\to\R^p$. We write $V_G:=G^{-1}(0)$, $\Sing G:=\{x\in U:\rank dG(x)<p\}$, and $\Disc(G):=G(\Sing G)$; the germ has \emph{isolated critical value} when $\Disc(G)\subseteq\{0\}$, in which case $\Sing G\subset V_G$. We use an inclusion rather than an equality so that submersions, for which $\Sing G=\emptyset$ and hence $\Disc(G)=\emptyset$, are not excluded. In the target $\R^p$ we always keep the Euclidean metric and write $B^p_\eta$, $\clo B^p_\eta$, $S^{p-1}_\eta$.

\begin{definition}\label{def:tube}
The germ $G$ \emph{admits a Milnor fibration on the $\rho$-tube} if there exists $\varepsilon_0>0$ such that for every $0<\varepsilon\le\varepsilon_0$ there exists $\eta=\eta(\varepsilon)$, $0<\eta\ll\varepsilon$, for which the restriction
\begin{equation}\label{eq:tubefib}
G\big|:\ \clo B^m_{\rho,\varepsilon}\cap G^{-1}\bigl(\clo B^p_\eta\setminus\{0\}\bigr)\ \longrightarrow\ \clo B^p_\eta\setminus\{0\}
\end{equation}
is a smooth locally trivial fibration whose diffeomorphism type does not depend on the (sufficiently small) choices of $\varepsilon$ and $\eta$.
\end{definition}

\begin{definition}\label{def:rhoreg}
$G$ is \emph{$\rho$-regular} if, for every sufficiently small $\varepsilon>0$, there is a neighborhood $N_\varepsilon$ of $S^{m-1}_{\rho,\varepsilon}\cap V_G$ in $S^{m-1}_{\rho,\varepsilon}$ such that, for every $x\in N_\varepsilon\setminus V_G$, the fiber $G^{-1}(G(x))$ meets $S^{m-1}_{\rho,\varepsilon}$ transversally at $x$.
\end{definition}

The following statement adapts the Euclidean existence theorem of \cite{RST24} to $\rho$-tubes; the proof is the classical Ehresmann argument.

\begin{proposition}\label{prop:tube}
If $G$ has isolated critical value and is $\rho$-regular, then $G$ admits a Milnor fibration on the $\rho$-tube.
\end{proposition}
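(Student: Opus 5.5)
The plan is the classical Ehresmann argument for a proper submersion of a manifold with boundary, run on the $\rho$-ball.

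Fix $\varepsilon\le\varepsilon_0$ small enough that Definition~\ref{def:rhoreg} applies, and set
\[
E_{\varepsilon,\eta}:=\clo B^m_{\rho,\varepsilon}\cap G^{-1}\bigl(\clo B^p_\eta\setminus\{0\}\bigr).
\]
The goal is to show that $G|_{E_{\varepsilon,\eta}}$ is a proper submersion on the interior and that its restriction to the boundary piece $S^{m-1}_{\rho,\varepsilon}\cap G^{-1}(\clo B^p_\eta\setminus\{0\})$ is also a submersion. Then Ehresmann's theorem for manifolds with boundary gives local triviality over $\clo B^p_\eta\setminus\{0\}$.

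\textbf{Interior points.} Since $\Disc(G)\subseteq\{0\}$, every point $x$ with $G(x)\neq0$ lies outside $\Sing G$, so $dG(x)$ has rank $p$.

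\textbf{Boundary points; choice of $\eta$.} This is the step I expect to be the main one. Let $K=S^{m-1}_{\rho,\varepsilon}\setminus N_\varepsilon$. It is compact and disjoint from $V_G$, because $N_\varepsilon$ is an open neighbourhood of $S^{m-1}_{\rho,\varepsilon}\cap V_G$ in the compact hypersurface $S^{m-1}_{\rho,\varepsilon}$. Hence $\min_K\norm{G}>0$, and I choose
\[
0<\eta<\min_K\norm{G}.
\]
Any boundary point $x$ with $0<\norm{G(x)}\le\eta$ then lies in $N_\varepsilon\setminus V_G$. By $\rho$-regularity, the fiber through $x$, which is a smooth $(m-p)$-manifold near $x$ because $x\notin\Sing G$, meets $S^{m-1}_{\rho,\varepsilon}$ transversally. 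Equivalently, $d\rho(x)$ does not vanish on $\ker dG(x)$, so $(dG(x),d\rho(x))$ has rank $p+1$. Therefore $G|_{S^{m-1}_{\rho,\varepsilon}}$ is a submersion at $x$.

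\textbf{Properness.} $E_{\varepsilon,\eta}$ is the preimage of $\clo B^p_\eta\setminus\{0\}$ under the continuous map $G$ restricted to the compact set $\clo B^m_{\rho,\varepsilon}$ (property (b) of a control function). So the preimage of any compact subset of the base is compact, and the restriction is proper. Combining the three items with Ehresmann's theorem gives a smooth locally trivial fibration.

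\textbf{Independence of $\varepsilon$ and $\eta$.} For $\eta'<\eta$, the base $\clo B^p_\eta\setminus\{0\}$ deformation retracts radially onto $\clo B^p_{\eta'}\setminus\{0\}$, and the fibration over the larger base is the pullback. So the restrictions are diffeomorphic fibrations.

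For $\varepsilon'<\varepsilon$, take $\eta$ admissible for both values. The key point is that the same transversality argument holds on every $S^{m-1}_{\rho,t}$ with $t\in[\varepsilon',\varepsilon]$, with a common $\eta$. I would obtain this in one of two ways:
\begin{itemize}
\item by a compactness argument on $\{\varepsilon'\le\rho\le\varepsilon\}$, using that $\rho$-regularity holds for all small radii; or
\item by applying the curve selection lemma to the analytic set of points where $\rho$ restricted to a fiber is critical, which shows this set does not accumulate on $V_G\setminus\{0\}$.
\end{itemize}
Then I would build a vector field on $G^{-1}(\clo B^p_\eta\setminus\{0\})\cap\{\varepsilon'\le\rho\le\varepsilon\}$ that is tangent to the fibers of $G$ and satisfies $d\rho(v)=1$. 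Such a field exists locally by the rank-$(p+1)$ condition and globally by a partition of unity. Its flow carries $E_{\varepsilon,\eta}$ onto $E_{\varepsilon',\eta}$ fiberwise, which gives the required isomorphism of fibrations.

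The delicate point is this uniformity of the transversality in $t$. If $\rho$-regularity is read literally only at each fixed $\varepsilon$, I would fall back on the curve selection argument to obtain it.
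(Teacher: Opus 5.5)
Your proof of local triviality for fixed $\varepsilon$ and $\eta$ follows the paper's argument. You choose $\eta$ by compactness so that $S^{m-1}_{\rho,\varepsilon}\cap G^{-1}(\clo B^p_\eta\setminus\{0\})\subset N_\varepsilon\setminus V_G$. Submersivity comes from $\Disc(G)\subseteq\{0\}$ in the interior and from $\rho$-regularity on the $\rho$-spherical face. Properness comes from compactness of $\clo B^m_{\rho,\varepsilon}$, and Ehresmann concludes.

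One small repair is needed there. The base $\clo B^p_\eta\setminus\{0\}$ has boundary, so $E_{\varepsilon,\eta}$ has corners along $S^{m-1}_{\rho,\varepsilon}\cap G^{-1}(S^{p-1}_\eta)$, and ``Ehresmann for manifolds with boundary'' does not apply verbatim. Either treat the second face $\clo B^m_{\rho,\varepsilon}\cap G^{-1}(S^{p-1}_\eta)$ and the corner, as the paper does. Or, more simply, since your $\eta$ may be any number below $\min_K\norm G$, prove local triviality over the open set $B^p_{\eta_1}\setminus\{0\}$ for some $\eta<\eta_1<\min_K\norm G$ and then restrict.

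The genuine gap is the independence of $\varepsilon$. You rightly single this out, and the paper itself only cites the isotopy arguments of \cite{ACT13,ARSRT20} there. However, neither of your two justifications works as stated.

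\emph{The compactness route} fails under Definition~\ref{def:rhoreg}. Suppose $x_n\in S^{m-1}_{\rho,t_n}\cap M_\rho(G)$ with $0<\norm{G(x_n)}\to0$ and $x_n\to x_0\in V_G\cap S^{m-1}_{\rho,t_0}$. Then $\rho$-regularity at level $t_0$ says nothing about the $x_n$ unless $t_n=t_0$, because the neighbourhoods $N_t$ carry no uniformity in $t$.

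\emph{The curve-selection route} has two problems. First, it requires $\rho$ analytic, which is not assumed in this proposition; the paper imposes analyticity of $\rho$ only afterwards. Second, even for analytic $\rho$, an analytic arc in $M_\rho(G)\setminus V_G$ ending at $x_0$ may cross the levels of $\rho$. There the pointwise hypothesis again gives nothing.

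The missing step is one that pushes the accumulation into the single level $S^{m-1}_{\rho,t_0}$. For analytic $\rho$ this can be done as follows:
\begin{enumerate}
\item Whitney-stratify $\Sigma:=\clo{M_\rho(G)\setminus V_G}$ so that $\Sigma\cap V_G$ is a union of strata.
\item Use curve selection to see that $\rho$ has no critical points on the positive-dimensional strata near $0$: along an arc of such critical points $\rho$ would be constant, hence $0$.
\item Apply Thom's first isotopy lemma to $\rho$ on $\Sigma$ over a small interval around $t_0$. The stratum-preserving trivialization moves a sequence in $\Sigma\setminus V_G\subseteq M_\rho(G)\setminus V_G$ converging to $x_0$ into $S^{m-1}_{\rho,t_0}$, contradicting $\rho$-regularity there.
\end{enumerate}
This yields condition (b). By compactness, as in Lemma~\ref{lem:btosub}, condition (b) then gives the common $\eta$ on the annulus $\{\varepsilon'\le\rho\le\varepsilon\}$ that your fibre-preserving vector field needs. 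For merely smooth $\rho$ you should instead take the uniform form of $\rho$-regularity, with one $\eta$ valid on all levels in $[\varepsilon',\varepsilon]$, as the hypothesis.
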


\begin{proof}
Fix $\varepsilon>0$ small, so that $S^{m-1}_{\rho,\varepsilon}$ is a compact smooth hypersurface, and let $N_\varepsilon$ be as in Definition \ref{def:rhoreg}. Since $S^{m-1}_{\rho,\varepsilon}$ is compact and $G$ is continuous, there is $\eta>0$, $0<\eta\ll\varepsilon$, with
\[
S^{m-1}_{\rho,\varepsilon}\cap G^{-1}\bigl(\clo B^p_\eta\setminus\{0\}\bigr)\subset N_\varepsilon\setminus V_G .
\]
Write $E:=\clo B^m_{\rho,\varepsilon}\cap G^{-1}(\clo B^p_\eta\setminus\{0\})$ for the total space of \eqref{eq:tubefib}.

\emph{Properness.} The space $E$ is \emph{not} compact: the fibre over $0$ has been removed, and $\clo B^p_\eta\setminus\{0\}$ is not closed. Properness must therefore be verified directly, and it does hold. Let $C\subset\clo B^p_\eta\setminus\{0\}$ be compact. Then $C$ is closed in $\R^p$, so $G^{-1}(C)$ is closed in $U$ and
\[
E\cap G^{-1}(C)=\clo B^m_{\rho,\varepsilon}\cap G^{-1}(C)
\]
is a closed subset of the compact set $\clo B^m_{\rho,\varepsilon}$, hence compact.

\emph{The boundary faces.} $E$ is a manifold with corners whose boundary consists of the $\rho$-spherical face
\[
\partial_sE:=S^{m-1}_{\rho,\varepsilon}\cap G^{-1}\bigl(\clo B^p_\eta\setminus\{0\}\bigr)
\qquad\text{and}\qquad
\partial_oE:=\clo B^m_{\rho,\varepsilon}\cap G^{-1}\bigl(S^{p-1}_\eta\bigr),
\]
meeting along the corner $\partial_sE\cap\partial_oE=S^{m-1}_{\rho,\varepsilon}\cap G^{-1}(S^{p-1}_\eta)$. Since $\Disc(G)\subseteq\{0\}$, every point of $G^{-1}(\clo B^p_\eta\setminus\{0\})$ is a regular point of $G$ for $\eta$ small; hence $G$ is a submersion on the interior of $E$, the face $\partial_oE$ is a smooth manifold, and $G|_{\partial_oE}:\partial_oE\to S^{p-1}_\eta$ is a submersion. On $\partial_sE$ the choice of $\eta$ made above, together with Definition \ref{def:rhoreg}, gives that $G|_{\partial_sE}$ is a submersion as well. At a corner point both restrictions are submersions simultaneously, so the two face conditions are compatible, and Ehresmann's theorem for proper submersions of manifolds with corners (equivalently, after the standard smoothing of the corner) yields local triviality.

\emph{Surjectivity.} $E\ne\emptyset$; the image of the proper map $G|_E$ is closed in $\clo B^p_\eta\setminus\{0\}$, the image of a submersion is open, and $\clo B^p_\eta\setminus\{0\}$ is connected because $p\ge2$. Hence $G|_E$ is onto, and \eqref{eq:tubefib} is a locally trivial fibration with nonempty fibre.

\emph{Independence of $\varepsilon$ and $\eta$.} Given $0<\varepsilon'<\varepsilon$ with compatible $\eta'$, the corresponding total spaces are compared by the isotopy arguments of \cite{ACT13,ARSRT20} applied to nested pairs of regular levels of $\rho$: these produce a diffeomorphism commuting with the projections, so the diffeomorphism type of the fibre does not depend on the admissible choices.
\end{proof}

\subsection{The Milnor set and condition (b)}
From now on we assume in addition that $\rho$ is \emph{real analytic}, so that the Curve Selection Lemma is available.

\begin{definition}\label{def:milnorset}
The \emph{Milnor set of $G$ associated with $\rho$} is the set germ $M_\rho(G):=\Sing(G,\rho)$, that is, the critical set of the map $(G,\rho):\R^m\to\R^{p+1}$. Thus $x\in M_\rho(G)$ if and only if the gradients $\nabla G_1(x),\dots,\nabla G_p(x),\nabla\rho(x)$ are linearly dependent; in particular $\Sing G\subset M_\rho(G)$, and $M_\rho(G)$ is an analytic set. We say that $G$ satisfies the \emph{Milnor condition (b) associated with $\rho$} if
\begin{equation}\label{eq:condb}
\clo{M_\rho(G)\setminus V_G}\cap V_G\subseteq\{0\}
\end{equation}
as set germs at the origin.
\end{definition}

For $\rho=\rho_E$ this is the usual Milnor condition (b). Since $\nabla\rho\ne0$ off the origin, a point $x\in S^{m-1}_{\rho,\varepsilon}\setminus\Sing(\rho)$ lies outside $M_\rho(G)$ if and only if $G\big|_{S^{m-1}_{\rho,\varepsilon}}$ is a submersion at $x$; equivalently, if and only if the fiber $G^{-1}(G(x))$ meets $S^{m-1}_{\rho,\varepsilon}$ transversally at $x$ (when $x\notin\Sing G$).

\begin{lemma}\label{lem:btosub}
Let $G$ have isolated critical value and satisfy condition (b) associated with $\rho$. Then there is $\varepsilon_0>0$ such that for every $0<\varepsilon<\varepsilon_0$ there is $\eta>0$, $0<\eta\ll\varepsilon$, for which
\[
G\big|:\ S^{m-1}_{\rho,\varepsilon}\cap G^{-1}\bigl(\clo B^p_\eta\setminus\{0\}\bigr)\longrightarrow \clo B^p_\eta\setminus\{0\}
\]
is a smooth submersion. In particular $G$ is $\rho$-regular.
\end{lemma}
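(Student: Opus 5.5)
The plan is to translate condition (b) into a statement about a punctured neighbourhood of $V_G$, and then to shrink $\eta$ by a compactness argument.

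First, by \eqref{eq:condb}, choose a representative and $\varepsilon_0>0$ small enough that
\[
\clo{M_\rho(G)\setminus V_G}\cap V_G\cap \clo B^m_{\rho,\varepsilon_0}\subseteq\{0\}.
\]
We may also take $\varepsilon_0$ below the constant in the definition of a control function, so that $\nabla\rho\neq0$ on $\clo B^m_{\rho,\varepsilon_0}\setminus\{0\}$. We may further assume that $\clo B^m_{\rho,\varepsilon_0}\cap\Sing G\subset V_G$, which uses the isolated critical value hypothesis.

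Now fix $0<\varepsilon<\varepsilon_0$. Consider the compact set $K:=S^{m-1}_{\rho,\varepsilon}\cap V_G$, which does not contain $0$ since $\rho(0)=0$. By the choice of $\varepsilon_0$, no point of $K$ is a limit of points of $M_\rho(G)\setminus V_G$. So every $x\in K$ has an open neighbourhood $W_x\subset U$ with $W_x\cap (M_\rho(G)\setminus V_G)=\emptyset$. Let $W$ be the union of the $W_x$. Then $W$ is an open neighbourhood of $K$ with $W\cap M_\rho(G)\subset V_G$.

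Next we choose $\eta$. The set $S^{m-1}_{\rho,\varepsilon}\setminus W$ is compact and disjoint from $V_G$, so the continuous function $\norm{G}$ attains a positive minimum $\mu$ on it (if the set is empty, take $\mu=\infty$). Choose $0<\eta<\mu$, and also $\eta\ll\varepsilon$. Then
\[
S^{m-1}_{\rho,\varepsilon}\cap G^{-1}\bigl(\clo B^p_\eta\setminus\{0\}\bigr)\subset W\setminus V_G .
\]
Such points lie outside $M_\rho(G)$. Hence $\nabla G_1,\dots,\nabla G_p,\nabla\rho$ are linearly independent there. Since $\nabla\rho\neq0$, this is exactly the statement that $d(G|_{S^{m-1}_{\rho,\varepsilon}})$ has rank $p$, by the remark following Definition \ref{def:milnorset}. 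The domain is open in the smooth hypersurface $S^{m-1}_{\rho,\varepsilon}$, so the restriction is a smooth submersion.

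For $\rho$-regularity, take $N_\varepsilon:=W\cap S^{m-1}_{\rho,\varepsilon}$, which is a neighbourhood of $S^{m-1}_{\rho,\varepsilon}\cap V_G$ in the sphere. At $x\in N_\varepsilon\setminus V_G$ we have $x\notin M_\rho(G)$. In particular $x\notin\Sing G$, so the fibre $G^{-1}(G(x))$ is smooth near $x$. Linear independence of the gradients then gives transversality of the fibre to $S^{m-1}_{\rho,\varepsilon}$.

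The main obstacle is the first step: passing from the set-germ inclusion \eqref{eq:condb} to a uniform open neighbourhood of the whole compact set $S^{m-1}_{\rho,\varepsilon}\cap V_G$, for every $\varepsilon<\varepsilon_0$ at once. I would handle it by fixing one representative of the germ valid on $\clo B^m_{\rho,\varepsilon_0}$. Because the inclusion then holds pointwise away from $0$, and $0\notin S^{m-1}_{\rho,\varepsilon}$, no curve selection is needed.
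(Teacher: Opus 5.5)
Your proof is correct and follows the same route as the paper's. You use the germ inclusion \eqref{eq:condb} on a fixed representative to get a neighbourhood of the compact link $S^{m-1}_{\rho,\varepsilon}\cap V_G$ avoiding $M_\rho(G)\setminus V_G$, then shrink $\eta$ by compactness; your positive-minimum argument for $\norm{G}$ on $S^{m-1}_{\rho,\varepsilon}\setminus W$ simply makes the paper's ``continuity and compactness'' step explicit. Two harmless slips: the domain $\{x\in S^{m-1}_{\rho,\varepsilon}:0<\norm{G(x)}\le\eta\}$ is not open in the sphere (it is a manifold with boundary), but submersivity is a pointwise rank condition, so nothing changes; and your extra reduction $\clo B^m_{\rho,\varepsilon_0}\cap\Sing G\subset V_G$ is never actually used, since $\Sing G\subset M_\rho(G)$ already.
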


\begin{proof}
Fix $0<\varepsilon<\varepsilon_0$. The set $S^{m-1}_{\rho,\varepsilon}\cap V_G$ is compact and does not contain the origin, so \eqref{eq:condb} provides a neighborhood $N_\varepsilon$ of it inside $S^{m-1}_{\rho,\varepsilon}$ that is disjoint from $M_\rho(G)\setminus V_G$. By continuity of $G$ and compactness, there is $0<\eta\ll\varepsilon$ with $S^{m-1}_{\rho,\varepsilon}\cap G^{-1}(\clo B^p_\eta\setminus\{0\})\subset N_\varepsilon\setminus V_G$; every point of this set lies outside $M_\rho(G)$, whence the submersivity. The last claim is precisely Definition \ref{def:rhoreg}.
\end{proof}

\begin{remark}
A converse holds under a uniform formulation of the quantifiers; see \cite[Lemma 6.10]{RST24} for the Euclidean case. We will not need it: for the obstruction results below we use the direct conic argument of Lemma \ref{lem:conic} instead.
\end{remark}

\begin{corollary}\label{cor:tube}
If $G$ has isolated critical value and satisfies condition (b) associated with $\rho$, then $G$ admits a Milnor fibration on the $\rho$-tube.
\end{corollary}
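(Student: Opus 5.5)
The corollary follows by chaining two results already established. By hypothesis, $G$ has isolated critical value and satisfies condition (b) associated with $\rho$. These are exactly the hypotheses of Lemma \ref{lem:btosub}. That lemma produces $\varepsilon_0>0$ such that for each $0<\varepsilon<\varepsilon_0$ there is $\eta\ll\varepsilon$ for which $G$ restricted to $S^{m-1}_{\rho,\varepsilon}\cap G^{-1}(\clo B^p_\eta\setminus\{0\})$ is a submersion. In particular, $G$ is $\rho$-regular in the sense of Definition \ref{def:rhoreg}.

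The first step is to check the quantifiers. Definition \ref{def:rhoreg} only asks for each sufficiently small $\varepsilon$ a neighbourhood $N_\varepsilon$ of $S^{m-1}_{\rho,\varepsilon}\cap V_G$ on which the fibres are transversal off $V_G$. Lemma \ref{lem:btosub} supplies this, in fact on the set $N_\varepsilon$ constructed in its proof. I would also shrink $\varepsilon_0$ so that it lies below the threshold in the definition of a control function, which keeps every $S^{m-1}_{\rho,\varepsilon}$ a compact regular level. I would further require $\varepsilon_0$ to be below the radius on which $\Disc(G)\subseteq\{0\}$ holds for the chosen representative.

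The second step is to invoke Proposition \ref{prop:tube} with the two hypotheses now in hand: isolated critical value and $\rho$-regularity. That proposition yields the Milnor fibration on the $\rho$-tube as in Definition \ref{def:tube}, including independence of the diffeomorphism type from the choices of $\varepsilon$ and $\eta$.

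The only point needing care, and the expected obstacle, is the boundary value $\varepsilon=\varepsilon_0$. Lemma \ref{lem:btosub} is stated for $\varepsilon<\varepsilon_0$, while Definition \ref{def:tube} uses $\varepsilon\le\varepsilon_0$. I would handle this by replacing $\varepsilon_0$ with any smaller positive number, say $\varepsilon_0/2$, before applying Proposition \ref{prop:tube}. Since all the statements are about germs, this loses nothing, and the proof is complete.
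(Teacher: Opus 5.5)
Your proposal is correct and matches the argument the paper intends: Lemma~\ref{lem:btosub} turns condition (b) plus the isolated critical value into $\rho$-regularity, and Proposition~\ref{prop:tube} then gives the fibration on the $\rho$-tube. Your handling of the endpoint $\varepsilon<\varepsilon_0$ versus $\varepsilon\le\varepsilon_0$ is fine, and the issue is in any case inessential, since Definition~\ref{def:rhoreg} and Proposition~\ref{prop:tube} only require the relevant properties for sufficiently small $\varepsilon$.
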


The next lemma shows that, for homogeneous data, condition (b) is not merely sufficient but also necessary for the tube fibration.

\begin{lemma}[Conic obstruction]\label{lem:conic}
Suppose that $G_1,\dots,G_p$ are homogeneous polynomials (possibly of different degrees) and that $\rho$ is a homogeneous polynomial control function, positive off the origin. If the Milnor condition (b) associated with $\rho$ fails, then for every $\varepsilon>0$ and every $\eta>0$ the restriction
\[
G\big|:\ S^{m-1}_{\rho,\varepsilon}\cap G^{-1}\bigl(\clo B^p_\eta\setminus\{0\}\bigr)\longrightarrow \clo B^p_\eta\setminus\{0\}
\]
has critical points. Consequently, $G$ admits no Milnor fibration on the $\rho$-tube.
\end{lemma}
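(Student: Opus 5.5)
Since condition (b) associated with $\rho$ fails, the origin lies in neither set, but there are points $x\in\clo{M_\rho(G)\setminus V_G}\cap V_G$ with $x\neq 0$ arbitrarily close to $0$. I will use the Curve Selection Lemma together with homogeneity to produce a critical point of $G|_{S_{\rho,\varepsilon}}$ inside any prescribed tube. Write $d_i=\deg G_i$ and $k=\deg\rho$. The key observation is that $M_\rho(G)$ and $V_G$ are both invariant under the dilations $x\mapsto tx$, $t>0$. Indeed, $\nabla G_i(tx)=t^{d_i-1}\nabla G_i(x)$ and $\nabla\rho(tx)=t^{k-1}\nabla\rho(x)$. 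Rescaling the individual gradients does not change their linear dependence, so $tx\in M_\rho(G)$ if and only if $x\in M_\rho(G)$. Likewise $G_i(tx)=t^{d_i}G_i(x)$, so $V_G$ is conic.

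\emph{Step 1: a bad point on every $\rho$-sphere.} Pick a nonzero $x_0\in\clo{M_\rho(G)\setminus V_G}\cap V_G$, and a sequence $y_n\in M_\rho(G)\setminus V_G$ with $y_n\to x_0$. Fix $\varepsilon>0$ and set $t_n=(\varepsilon/\rho(y_n))^{1/k}$. Since $\rho(y_n)\to\rho(x_0)>0$, we get $t_n\to t_\ast:=(\varepsilon/\rho(x_0))^{1/k}$. By conic invariance, $z_n:=t_ny_n\in M_\rho(G)\setminus V_G$ and $\rho(z_n)=\varepsilon$, so $z_n\in S^{m-1}_{\rho,\varepsilon}$. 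Moreover $z_n\to t_\ast x_0\in V_G$, hence $G(z_n)\to 0$ while $G(z_n)\neq 0$. Given $\eta>0$, for $n$ large we therefore have $z_n\in S^{m-1}_{\rho,\varepsilon}\cap G^{-1}(\clo B^p_\eta\setminus\{0\})$. This step needs no curve selection at all, since a sequence suffices.

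\emph{Step 2: such a point is critical.} Because $\nabla\rho(z_n)\neq 0$ (Euler's identity gives $\langle\nabla\rho,z\rangle=k\rho>0$), linear dependence of $\nabla G_1,\dots,\nabla G_p,\nabla\rho$ at $z_n$ means exactly one of two things. Either $dG(z_n)$ has rank $<p$, or some nonzero combination $\sum\lambda_i\nabla G_i(z_n)$ is a multiple of $\nabla\rho(z_n)$. In either case $dG(z_n)$ restricted to $T_{z_n}S_{\rho,\varepsilon}=\ker d\rho(z_n)$ has rank $<p$. This is the equivalence already noted after Definition \ref{def:milnorset}. So $z_n$ is a critical point of the restriction, which proves the first assertion.

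\emph{Step 3: no tube fibration.} I expect this to be the main obstacle, because Definition \ref{def:tube} asks for local triviality rather than submersivity of the boundary face. I would argue as follows. Suppose the tube fibration existed for some $\varepsilon$ and $\eta$. The total space $E$ is a manifold with corners, and the fibration is locally trivial in a way that must respect its boundary. Then the restriction to the boundary stratum $\partial_sE$ is itself a locally trivial fibration over $\clo B^p_\eta\setminus\{0\}$, which forces $G|_{\partial_sE}$ to be a submersion. This contradicts Step 2. I would make this rigorous by noting that a diffeomorphism of manifolds with corners preserves the boundary faces.

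If one prefers to avoid assumptions on how the trivializations treat corners, I would use homogeneity once more. The critical values $G(z_n)$ accumulate at $0$. The rescaled points $sz_n$ are critical for the restriction to $S_{\rho,s^k\varepsilon}$, with values $(s^{d_i}G_i(z_n))_i$. Hence the fibre topology of $G|_{\partial_sE}$ genuinely jumps, for instance in the number of boundary components of the fibres, across arbitrarily small $\eta$. This contradicts the required independence of the fibre type from $\varepsilon$ and $\eta$.
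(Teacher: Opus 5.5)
Your proof is correct and follows the paper's argument. It uses the same radial rescaling $z_n=t_ny_n$ onto $S^{m-1}_{\rho,\varepsilon}$, based on $M_\rho(G)$ and $V_G$ being cones, and the same contradiction via a local trivialization carrying the $\rho$-spherical boundary face onto $U\times\partial F$. The paper takes $U$ inside the open punctured ball $B^p_\eta\setminus\{0\}$, which is possible since $0<\norm{G(z_n)}<\eta$ for large $n$, so no corners intervene and your concern about boundary faces disappears. Your fallback argument about the fibre topology ``jumping'' is unnecessary and, as stated, unjustified: a critical point of a proper map need not change the fibre type, so you should simply drop it.
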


\begin{proof}
All defining equations of $M_\rho(G)$ (vanishing of the $(p+1)$-minors of the extended Jacobian) and of $V_G$ are homogeneous, so both sets are cones. By the failure of \eqref{eq:condb} there are $x_0\in V_G\setminus\{0\}$ and $p_n\in M_\rho(G)\setminus V_G$ with $p_n\to x_0$. Let $d=\deg\rho$ and put $r_n:=(\varepsilon/\rho(p_n))^{1/d}$ and $q_n:=r_np_n$. Then $\rho(q_n)=\varepsilon$, i.e.\ $q_n\in S^{m-1}_{\rho,\varepsilon}$; since $M_\rho(G)$ and $V_G$ are cones, $q_n\in M_\rho(G)\setminus V_G$; and $r_n\to r_0:=(\varepsilon/\rho(x_0))^{1/d}\in(0,\infty)$, so $q_n\to r_0x_0\in V_G\cap S^{m-1}_{\rho,\varepsilon}$. Hence $G(q_n)\to G(r_0x_0)=0$ while $G(q_n)\ne0$; for every $\eta>0$, infinitely many $q_n$ satisfy $0<\norm{G(q_n)}<\eta$. Since $\nabla\rho(q_n)\ne0$ and $q_n\in M_\rho(G)$, the restriction $G|_{S^{m-1}_{\rho,\varepsilon}}$ is critical at $q_n$.

Now suppose, for contradiction, that the fibration \eqref{eq:tubefib} exists for some $\varepsilon,\eta$, and pick $q=q_n$ as above with $y:=G(q)$, $0<\norm y<\eta$. Choose an open neighborhood $U\ni y$ inside the open punctured ball $B^p_\eta\setminus\{0\}$ and a trivialization $\varphi:G|^{-1}(U)\xrightarrow{\ \cong\ }U\times F$ over $U$, a diffeomorphism of manifolds with boundary commuting with the projections. Since $U$ avoids $S^{p-1}_\eta$, the boundary of $G|^{-1}(U)$ is exactly $S^{m-1}_{\rho,\varepsilon}\cap G^{-1}(U)$, which $\varphi$ carries onto $U\times\partial F$. Hence $G$ restricted to this boundary stratum is, up to diffeomorphism, the projection $U\times\partial F\to U$: a submersion at every point. But $q$ lies in this stratum and is a critical point of $G|_{S^{m-1}_{\rho,\varepsilon}}$  a contradiction.
\end{proof}

\subsection{An ICIS-type sufficient criterion}

\begin{proposition}\label{prop:icis}
Let $G:(\R^m,0)\to(\R^p,0)$ with $\dim V_G>0$ and $\Sing G\cap V_G=\{0\}$. Then $G$ satisfies the Milnor condition (b) associated with \emph{every} admissible real analytic control function $\rho$.
\end{proposition}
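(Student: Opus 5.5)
We argue by the usual openness-of-rank argument, after first showing that, near the origin, $V_G$ meets the levels of $\rho$ transversally. The hypothesis $\Sing G\cap V_G=\{0\}$ means that $dG(x)$ has rank $p$ at every $x\in V_G\setminus\{0\}$ close to $0$. So $V_G\setminus\{0\}$ is a smooth analytic submanifold of codimension $p$, and its conormal space at $x$ is $\spann\{\nabla G_1(x),\dots,\nabla G_p(x)\}$. Condition \eqref{eq:condb} will follow once we know:
\[
(\ast)\qquad \nabla\rho(x)\notin\spann\{\nabla G_1(x),\dots,\nabla G_p(x)\}\quad\text{for all } x\in V_G\setminus\{0\} \text{ near } 0 .
\]

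\emph{Step 1 (reduction to $(\ast)$).} Suppose $(\ast)$ holds on $V_G\cap \clo B^m_{\rho,\varepsilon_1}\setminus\{0\}$. Take $x_0\in V_G\setminus\{0\}$ in this set. At $x_0$ the $p+1$ gradients $\nabla G_1,\dots,\nabla G_p,\nabla\rho$ are linearly independent, so some $(p+1)$-minor of the Jacobian of $(G,\rho)$ is nonzero at $x_0$. This remains true on a neighbourhood $W$ of $x_0$. Hence $W\cap M_\rho(G)=\emptyset$, so $x_0\notin\clo{M_\rho(G)\setminus V_G}$. This gives \eqref{eq:condb} as germs.

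\emph{Step 2 (proof of $(\ast)$ by curve selection).} This is the main step. Let
\[
C:=\{x\in V_G:\ \rank\bigl(dG(x),d\rho(x)\bigr)<p+1\}.
\]
It is an analytic set, cut out by $G=0$ and the vanishing of the $(p+1)$-minors; this uses that $\rho$ is analytic. Suppose $(\ast)$ fails arbitrarily close to $0$, so $0\in\clo{C\setminus\{0\}}$. The Curve Selection Lemma gives a real analytic curve $\gamma:[0,\delta)\to\R^m$ with $\gamma(0)=0$ and $\gamma(t)\in C\setminus\{0\}$ for $t>0$.

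For $t>0$ we have $\gamma(t)\in V_G\setminus\{0\}$, where $dG$ has rank $p$. So the dependence relation can be written
\[
\nabla\rho(\gamma(t))=\sum_i\lambda_i(t)\nabla G_i(\gamma(t)).
\]
We do not need the $\lambda_i$ to be smooth, only pointwise defined; they are in fact unique. Since $G_i(\gamma(t))\equiv0$, differentiating gives $\langle\nabla G_i(\gamma(t)),\gamma'(t)\rangle=0$. Therefore
\[
\frac{d}{dt}\rho(\gamma(t))=\langle\nabla\rho(\gamma(t)),\gamma'(t)\rangle=\sum_i\lambda_i(t)\,\frac{d}{dt}G_i(\gamma(t))=0\qquad(t>0).
\]
So $\rho\circ\gamma$ is constant on $(0,\delta)$. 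By continuity this constant equals $\rho(\gamma(0))=\rho(0)=0$. Condition (a) in the definition of a control function then forces $\gamma(t)=0$, contradicting $\gamma(t)\neq0$. Hence $(\ast)$ holds on a punctured neighbourhood of $0$, and Step 1 concludes.

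The only delicate points are bookkeeping. First, the conclusion is a statement about germs, so we work in a $\rho$-ball small enough that $\Sing G\cap V_G=\{0\}$ holds there. Second, the Curve Selection Lemma needs analyticity of both $G$ and $\rho$, which is why this proposition is stated for admissible real analytic controls. The hypothesis $\dim V_G>0$ only ensures the statement is non-vacuous. If $V_G=\{0\}$, condition \eqref{eq:condb} holds trivially anyway.
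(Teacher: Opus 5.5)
Your proof is correct and follows essentially the same route as the paper. You reduce condition (b) to the absence of points of $(M_\rho(G)\cap V_G)\setminus\{0\}$ near the origin; the paper does this by passing to the limit with the inverse Gram matrix, you by the openness of the full-rank condition, which amounts to closedness of $M_\rho(G)$. You then exclude such points by applying the Curve Selection Lemma to this analytic set and observing that $\rho$ is constant, hence zero, along the selected curve.
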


\begin{proof}
Suppose condition \eqref{eq:condb} fails. Failure as a germ means that in every neighborhood of the origin there is a point $x_0\in\clo{M_\rho(G)\setminus V_G}\cap V_G$ with $x_0\ne0$; fix such an $x_0$ inside a ball where $\Sing G\cap V_G=\{0\}$ holds, and a sequence $p_n\in M_\rho(G)\setminus V_G$ with $p_n\to x_0$.

Since $x_0\in V_G\setminus\{0\}$, we have $x_0\notin\Sing G$; as $\Sing G$ is closed, $dG$ has rank $p$ on a neighborhood $W$ of $x_0$, and the Gram matrix $\Gamma(x)=\bigl(\langle\nabla G_i(x),\nabla G_j(x)\rangle\bigr)_{i,j}$ is invertible on $W$ with continuous inverse. For large $n$, $p_n\in W\setminus\Sing G$, so membership in $M_\rho(G)$ forces
\[
\nabla\rho(p_n)=\sum_{i=1}^p a_i(p_n)\nabla G_i(p_n),\qquad a(p_n)=\Gamma(p_n)^{-1}\beta(p_n),\ \ \beta_i=\langle\nabla\rho,\nabla G_i\rangle.
\]
Letting $n\to\infty$ gives $\nabla\rho(x_0)\in\spann\{\nabla G_1(x_0),\dots,\nabla G_p(x_0)\}=\bigl(T_{x_0}V_G\bigr)^\perp$, where $V_G$ is smooth at $x_0$ with $T_{x_0}V_G=\ker dG(x_0)$ because $x_0$ is a regular point of $G$ lying on $G^{-1}(0)$. Hence $x_0$ is a critical point of $\rho|_{V_G\setminus\{0\}}$.

At points of $V_G\setminus\{0\}$ near the origin (where $dG$ has rank $p$), criticality of $\rho|_{V_G}$ is equivalent to membership in $M_\rho(G)$; thus the critical set of $\rho|_{V_G\setminus\{0\}}$ equals $(M_\rho(G)\cap V_G)\setminus\{0\}$ there, an analytic set. Since failure of \eqref{eq:condb} produces such critical points arbitrarily close to $0$, the Curve Selection Lemma yields an analytic curve $\alpha:(0,\delta)\to V_G\setminus\{0\}$ of critical points with $\alpha(t)\to0$. Then $\frac{d}{dt}(\rho\circ\alpha)(t)=\langle\nabla\rho(\alpha(t)),\alpha'(t)\rangle=0$ because $\alpha'(t)\in T_{\alpha(t)}V_G$, so $\rho\circ\alpha$ is constant; the constant is $\lim_{t\to0}\rho(\alpha(t))=\rho(0)=0$, forcing $\alpha\equiv0$ since $\rho^{-1}(0)=\{0\}$---a contradiction.
\end{proof}

Proposition \ref{prop:icis} shows that the choice of $\rho$ only matters for germs with $\Sing G\cap V_G\ne\{0\}$; all examples below are of this kind.

\subsection{Examples}

\begin{example}\label{ex:basic}
Let $G(x,y,z)=(xz,yz)$ and $\rho(x,y,z)=x^2+y^2+z^4$; note $\nabla\rho=(2x,2y,4z^3)$ vanishes only at $0$, so $\rho$ defines the origin. Here $\Sing G=\{z=0\}$, $V_G=\{z=0\}\cup\{x=y=0\}$, so $\Disc(G)=\{0\}$, and
\[
\det\begin{pmatrix}z&0&x\\0&z&y\\2x&2y&4z^3\end{pmatrix}=2z(2z^4-x^2-y^2),
\qquad
M_\rho(G)=\{z=0\}\cup\{x^2+y^2=2z^4\}.
\]
If $(x_n,y_n,z_n)\in M_\rho(G)\setminus V_G$ converges to $(x_0,y_0,z_0)\in V_G$, then $x_0^2+y_0^2=2z_0^4$; testing against the two components of $V_G$ forces $(x_0,y_0,z_0)=0$ in both cases. Hence condition (b) holds and, by Corollary \ref{cor:tube}, $G$ admits a Milnor fibration on the $\rho$-tube.
\end{example}

\begin{example}\label{ex:nine}
Let $G(x,y,z)=\bigl(xz,\,yz(9x^2+y^2+z^2)\bigr)$, so $V_G=\{z=0\}\cup\{x=y=0\}$ and $\Sing G=\{z=0\}$.

\emph{Failure for $\rho_E=x^2+y^2+z^2$.} A direct computation gives
\[
\det(\nabla G_1,\nabla G_2,\nabla\rho_E)=2z\bigl[z^4+8x^2z^2-(3x^2-y^2)^2\bigr].
\]
For $x_0\ne0$ and $y^2\ne3x_0^2$, the positive root of the biquadratic,
\[
z(y)^2=-4x_0^2+\sqrt{16x_0^4+(3x_0^2-y^2)^2}\;>\;0,
\]
produces points $\bigl(x_0,y,z(y)\bigr)\in M_{\rho_E}(G)\setminus V_G$ which converge, as $y\to\sqrt3\,x_0$, to $(x_0,\sqrt3x_0,0)\in V_G\setminus\{0\}$. Hence $\clo{M_{\rho_E}(G)\setminus V_G}\cap V_G$ contains the punctured lines $y=\pm\sqrt3\,x$ in $\{z=0\}$, and condition (b) fails for $\rho_E$. Since $G_1,G_2,\rho_E$ are homogeneous, Lemma \ref{lem:conic} shows that $G$ admits \emph{no} Milnor fibration on the Euclidean tube.

\emph{Regularity for $\rho_2=9x^2+y^2+z^2$.} Now
\[
\det(\nabla G_1,\nabla G_2,\nabla\rho_2)=2z\bigl[z^4-(9x^2+y^2)^2\bigr]=2z(z^2-9x^2-y^2)(z^2+9x^2+y^2),
\]
so $M_{\rho_2}(G)=\{z=0\}\cup\{z^2=9x^2+y^2\}$, and any limit point of the second component on $V_G$ satisfies $z_0^2=9x_0^2+y_0^2$ together with $z_0=0$ or $x_0=y_0=0$, hence equals the origin. Condition (b) holds for $\rho_2$, and $G$ admits a Milnor fibration on the $\rho_2$-tube by Corollary \ref{cor:tube}.
\end{example}

\section{The family \texorpdfstring{$G_\rho$}{Grho} and the breaking criterion}\label{sec:family}

Let $\rho(x,y,z)=ax^{2m}+by^{2n}+cz^{2k}$ with $a,b,c>0$ and integers $m,n,k\ge1$, and consider
\[
G_\rho(x,y,z)=\bigl(xz,\;yz\,\rho(x,y,z)\bigr).
\]
Then $V_{G_\rho}=\{z=0\}\cup\{x=y=0\}$ for every choice of parameters.

\begin{proposition}\label{prop:family}
Each $G_\rho$ has isolated critical value and satisfies the Milnor condition (b) associated with its own control function $\rho$; consequently it admits a Milnor fibration on the $\rho$-tube.
\end{proposition}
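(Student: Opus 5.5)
The plan is to verify the two hypotheses of Corollary \ref{cor:tube} by direct computation: isolated critical value, and condition (b) associated with $\rho$. The key simplification will be to write the gradient of the second component as
\[
\nabla G_2=\nabla(yz\rho)=z\rho\,e_y+y\rho\,e_z+yz\,\nabla\rho ,
\]
where $e_y,e_z$ are standard basis vectors. The last term is a multiple of $\nabla\rho$, so it disappears from the determinant $\det(\nabla G_1,\nabla G_2,\nabla\rho)$.

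\emph{Step 1 (isolated critical value).} Take the $2\times2$ minor of $dG_\rho$ in the $x,y$ columns, with $\nabla G_1=(z,0,x)$. It equals
\[
z\cdot\bigl(z\rho+yz\rho_y\bigr)=z^2\bigl(\rho+2nb\,y^{2n}\bigr).
\]
Away from the origin this is strictly positive whenever $z\neq0$, because $\rho>0$ there. Hence $\Sing G_\rho\subset\{z=0\}\subset V_{G_\rho}$, and so $\Disc(G_\rho)\subseteq\{0\}$.

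\emph{Step 2 (the Milnor set).} Using the decomposition above, the determinant becomes
\[
\det(\nabla G_1,\nabla G_2,\nabla\rho)=\rho\,\det\begin{pmatrix}z&0&x\\0&z&y\\ \rho_x&\rho_y&\rho_z\end{pmatrix}
=\rho\,z\bigl(z\rho_z-y\rho_y-x\rho_x\bigr).
\]
Since each monomial of $\rho$ involves a single variable, $x\rho_x=2ma\,x^{2m}$, $y\rho_y=2nb\,y^{2n}$ and $z\rho_z=2kc\,z^{2k}$. This gives
\[
M_\rho(G_\rho)=\{z=0\}\cup\{0\}\cup\{2kc\,z^{2k}=2ma\,x^{2m}+2nb\,y^{2n}\}.
\]

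\emph{Step 3 (condition (b)).} The set $M_\rho(G_\rho)\setminus V_{G_\rho}$ lies in the closed hypersurface $W:=\{kc\,z^{2k}=ma\,x^{2m}+nb\,y^{2n}\}$, so any limit point $(x_0,y_0,z_0)\in V_{G_\rho}$ also lies in $W$. There are two cases. If $z_0=0$, then $ma\,x_0^{2m}+nb\,y_0^{2n}=0$, which forces $x_0=y_0=0$. If $x_0=y_0=0$, then $kc\,z_0^{2k}=0$, so $z_0=0$. In both cases the limit point is the origin, which is exactly \eqref{eq:condb}. Corollary \ref{cor:tube} then gives the Milnor fibration on the $\rho$-tube.

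The only step that requires care is the determinant computation in Step 2. Once the $yz\nabla\rho$ term is discarded, the remainder is an Euler-type expression, and its sum-of-positive-terms form is what makes the limit argument in Step 3 immediate.
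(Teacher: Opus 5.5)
Your proposal is correct and follows the paper's proof essentially step for step. It uses the same splitting of $\nabla G_2$ to discard the $yz\nabla\rho$ term, the same Euler-type determinant $2\rho z(ckz^{2k}-bny^{2n}-amx^{2m})$, the same two-case limit argument for condition (b), and the same $2\times2$ minor $z^2(\rho+2bny^{2n})$ for the isolated critical value (the paper also notes that $\Sing G_\rho=\{z=0\}$ exactly, but your inclusion $\Sing G_\rho\subset\{z=0\}$ already suffices).
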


\begin{proof}
Writing $G_1=xz$, $G_2=yz\rho$, we have $\nabla G_2=yz\nabla\rho+(0,z\rho,y\rho)$, and row reduction gives
\begin{equation}\label{eq:selfdet}
\begin{aligned}
\det(\nabla G_1,\nabla G_2,\nabla\rho)
&=\det\begin{pmatrix}z&0&x\\0&z\rho&y\rho\\\partial_x\rho&\partial_y\rho&\partial_z\rho\end{pmatrix}
=\rho\, z\bigl(z\partial_z\rho-y\partial_y\rho-x\partial_x\rho\bigr)\\
&=2\rho\,z\bigl(ckz^{2k}-bny^{2n}-amx^{2m}\bigr).
\end{aligned}
\end{equation}
Since $\rho>0$ off the origin, $M_\rho(G_\rho)=\{z=0\}\cup\{ckz^{2k}=amx^{2m}+bny^{2n}\}$. A sequence in the second component with $z\ne0$ converging to a point of $V_{G_\rho}$ satisfies, in the limit, $ckz_0^{2k}=amx_0^{2m}+bny_0^{2n}$ together with either $z_0=0$ or $x_0=y_0=0$; positivity of the coefficients forces $(x_0,y_0,z_0)=0$ in both cases, proving (b).

For the isolated critical value, the $2\times2$ minor of $dG_\rho$ formed by the first two columns is $z^2(\rho+2bny^{2n})$, which vanishes only on $\{z=0\}$ (a vanishing sum of two nonnegative terms with $z\ne0$ would force $\rho=0$, i.e.\ the origin). On $\{z=0\}$ the first two columns of $dG_\rho$ vanish, so $\Sing G_\rho=\{z=0\}\subset V_{G_\rho}$ and $\Disc(G_\rho)=\{0\}$. Corollary \ref{cor:tube} concludes.
\end{proof}

We now fix $m=n=k=1$ and test a member of the family against a \emph{different} elliptic control function. This is the comparison criterion announced in the introduction.

\begin{theorem}[Breaking criterion]\label{thm:criterion}
Let $\rho_1=a_1x^2+b_1y^2+c_1z^2$ and $\rho_2=a_2x^2+b_2y^2+c_2z^2$ with all coefficients positive, let $G_{\rho_1}=(xz,\,yz\rho_1)$, and set
\[
\lambda:=\frac{a_1b_2}{a_2b_1}.
\]
\begin{enumerate}
\item[(i)] If $0<\lambda<9$, or if $\lambda=9$ and $a_1c_2<5a_2c_1$, then $G_{\rho_1}$ satisfies the Milnor condition (b) associated with $\rho_2$; in particular it is $\rho_2$-regular and admits a Milnor fibration on the $\rho_2$-tube.
\item[(ii)] If $\lambda>9$, or if $\lambda=9$ and $a_1c_2\ge5a_2c_1$, then condition (b) associated with $\rho_2$ fails and $G_{\rho_1}$ admits no Milnor fibration on the $\rho_2$-tube.
\end{enumerate}
\end{theorem}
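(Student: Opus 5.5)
The plan is to compute the Milnor set $M_{\rho_2}(G_{\rho_1})$ explicitly and examine its trace near the punctured components of $V_{G_{\rho_1}}=\{z=0\}\cup\{x=y=0\}$. Expanding $\det(\nabla G_1,\nabla G_2,\nabla\rho_2)$ with $\nabla G_1=(z,0,x)$, $\nabla G_2=\bigl(2a_1xyz,\ z(\rho_1+2b_1y^2),\ y(\rho_1+2c_1z^2)\bigr)$ and $\nabla\rho_2=2(a_2x,b_2y,c_2z)$ should give $2z\,H(x,y,z)$ with
\[
H=c_2z^2(\rho_1+2b_1y^2)-b_2y^2(\rho_1+2c_1z^2)+2a_1b_2x^2y^2-a_2x^2(\rho_1+2b_1y^2).
\]
Thus $M_{\rho_2}(G_{\rho_1})=\{z=0\}\cup\{H=0\}$. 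Points of $\{H=0,\ z\neq0\}$ off the $z$-axis lie in $M_{\rho_2}\setminus V_G$. Condition (b) is therefore decided by whether $\{H=0\}$ accumulates at nonzero points of $V_G$ through points with $z\ne0$.

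On the $z$-axis one has $H(0,0,z_0)=c_1c_2z_0^4>0$, so by continuity nothing accumulates there. On the plane $z=0$, substituting $\rho_1=a_1x^2+b_1y^2$ gives
\[
H(x,y,0)=-b_1b_2y^4+(a_1b_2-3a_2b_1)x^2y^2-a_1a_2x^4 .
\]
Viewed as a quadratic in $t=y^2/x^2$, its discriminant divided by $(a_2b_1)^2$ is $(\lambda-3)^2-4\lambda=(\lambda-1)(\lambda-9)$. A positive root also needs $\lambda>3$, so positive roots exist iff $\lambda\ge9$. For $\lambda<9$ we then get $H(x,y,0)<0$ on $\{z=0\}\setminus\{0\}$. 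By continuity $H\neq0$ in a neighbourhood of every nonzero point of $V_G$, which proves (b). Part (i) then follows from Lemma \ref{lem:btosub} and Corollary \ref{cor:tube}; the isolated critical value comes from Proposition \ref{prop:family} with $m=n=k=1$.

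For $\lambda>9$ the two positive roots are simple, so $H(x_0,\cdot,0)$ changes sign at $y_0=\sqrt{t_0}\,x_0$. By the intermediate value theorem in $y$, for each small $z\ne0$ there is a zero of $H$ near $(x_0,y_0,z)$. These zeros converge to $(x_0,y_0,0)\in V_G\setminus\{0\}$, so (b) fails. Since $G_1,G_2,\rho_2$ are homogeneous, Lemma \ref{lem:conic} excludes the tube fibration.

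The delicate step is the borderline case $\lambda=9$. There the double root is $t_0=3a_2/b_2$, and $H(\cdot,\cdot,0)\le0$ vanishes exactly on the lines $y^2=t_0x^2$, so the $z$-dependence decides. I will collect the $z^2$-coefficient of $H$:
\[
K=(a_1c_2-a_2c_1)x^2+3(b_1c_2-b_2c_1)y^2 .
\]
On the critical line, using $a_1b_2=9a_2b_1$, this becomes $K=2(a_1c_2-5a_2c_1)x^2$. The $z^4$-coefficient is $c_1c_2>0$.
\begin{itemize}
\item If $a_1c_2<5a_2c_1$: $H$ is a negative semidefinite form in $(y-y_0,z^2)$ to leading order. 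I will check this carefully, probably via the homogeneity reduction to $x_0=1$ and a local expansion. It gives $H<0$ near the line minus $\{z=0\}$, hence (b).
\item If $a_1c_2\ge5a_2c_1$: $H(x_0,y_0,z)\ge c_1c_2z^4>0$ for small $z\ne0$, while $H(x_0,y,z)<0$ for $y$ at a fixed small distance from $y_0$ and $z$ small enough. The intermediate value theorem in $y$ again produces zeros with $z\ne0$ accumulating at $(x_0,y_0,0)$.
\end{itemize}
Lemma \ref{lem:conic} then finishes (ii).
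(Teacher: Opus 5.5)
Your proposal is correct and follows essentially the paper's route: the same determinant (your $H$ is the paper's $-Q$), the same reduction to the plane $\{z=0\}$ with discriminant $(a_2b_1)^2(\lambda-1)(\lambda-9)$, and the same borderline coefficient $2(a_1c_2-5a_2c_1)$ on the double-root lines, with Corollary \ref{cor:tube} and Lemma \ref{lem:conic} finishing as in the paper. The differences are minor. You use intermediate-value sign changes where the paper uses the Implicit Function Theorem and the explicit discriminant $D(T)$ in $u$. Your sub-case $a_1c_2<5a_2c_1$ is best stated via the exact identity $H=-b_1b_2(y^2-t_0x^2)^2+K\,z^2+c_1c_2z^4$: since $K<0$ near the lines by continuity, $H<0$ for small $z\ne0$ there. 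This phrasing should replace ``negative semidefinite form in $(y-y_0,z^2)$''.
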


\begin{proof}
\emph{Step 1: the Milnor set.} With
\[
\nabla G_2=yz\,\nabla\rho_1+(0,\,z\rho_1,\,y\rho_1),
\qquad
\nabla\rho_2=(2a_2x,\,2b_2y,\,2c_2z),
\]
expansion of the extended Jacobian determinant along its first row and collection of terms yields the identity
\begin{equation}\label{eq:Qident}
\det(\nabla G_1,\nabla G_2,\nabla\rho_2)=-2z\,Q(x^2,y^2,z^2),
\end{equation}
where, in the variables $X=x^2$, $Y=y^2$, $Z=z^2$,
\[
\begin{split}
Q(X,Y,Z)=a_1a_2X^2+(3a_2b_1-a_1b_2)XY+b_1b_2Y^2\qquad\qquad\\
+(a_2c_1-a_1c_2)XZ+3(b_2c_1-b_1c_2)YZ-c_1c_2Z^2.
\end{split}
\]
Hence $M_{\rho_2}(G_{\rho_1})=\{z=0\}\cup\{Q(x^2,y^2,z^2)=0\}$, and condition (b) concerns the accumulation of $\{Q=0\}\setminus V_{G_{\rho_1}}$ on $V_{G_{\rho_1}}\setminus\{0\}$.

\emph{Step 2: no accumulation on the $z$-axis.} For $p_0=(0,0,z_0)$ with $z_0\ne0$ we have $Q(0,0,z_0^2)=-c_1c_2z_0^4\ne0$; by continuity of $Q$, no sequence in $\{Q=0\}$ converges to $p_0$. Any accumulation on $V_{G_{\rho_1}}\setminus\{0\}$ must therefore occur on $\{z=0\}\setminus\{0\}$, i.e.\ at points $(x_0,y_0,0)$ with $(x_0,y_0)\ne(0,0)$, along sequences with $Z\to0$.

\emph{Step 3: no accumulation with $x_0=0$.} Suppose $(x_n,y_n,z_n)\in\{Q=0\}$, $z_n\ne0$, converges to $(0,y_0,0)$ with $y_0\ne0$. Dividing $Q(X_n,Y_n,Z_n)=0$ by $Y_n^2$ and writing $N_n=X_n/Y_n\to0$, $S_n=Z_n/Y_n\to0$, we get in the limit $b_1b_2=0$, a contradiction. Hence any accumulation point has $x_0\ne0$; along such sequences the ratios $M_n:=Y_n/X_n\to y_0^2/x_0^2=:M_*\in[0,\infty)$ and $T_n:=Z_n/X_n\to0$ are well defined and finite.

\emph{Step 4: reduction.} Dividing $Q=0$ by $X^2$ and writing $M=Y/X\ge0$, $T=Z/X>0$, we obtain $q(M,T)=0$ with
\begin{equation}\label{eq:qMT}
q(M,T)=b_1b_2M^2+(3a_2b_1-a_1b_2)M+a_1a_2
+T\bigl[(a_2c_1-a_1c_2)+3(b_2c_1-b_1c_2)M\bigr]-c_1c_2T^2,
\end{equation}
a polynomial of degree exactly $2$ in $M$ (with leading coefficient $b_1b_2>0$) and degree $2$ in $T$. Accumulation at a point of $\{z=0\}$ with $x_0\ne0$ therefore forces, in the limit, $q(M_*,0)=0$ for some $M_*\ge0$:
\begin{equation}\label{eq:binary}
b_1b_2M_*^2+(3a_2b_1-a_1b_2)M_*+a_1a_2=0,\qquad M_*\ge0.
\end{equation}

\emph{Step 5: solvability of \eqref{eq:binary}.} The discriminant of \eqref{eq:binary} is
\[
(3a_2b_1-a_1b_2)^2-4a_1a_2b_1b_2=(a_2b_1)^2\bigl[(3-\lambda)^2-4\lambda\bigr]=(a_2b_1)^2(\lambda-1)(\lambda-9).
\]
The product of the roots is $a_1a_2/(b_1b_2)>0$ (so real roots share their sign) and their sum is $a_2(\lambda-3)/b_2$. If $0<\lambda\le1$, the roots are real but both negative; if $1<\lambda<9$, they are not real; note also that $M_*=0$ is never a root, since $q(0,0)=a_1a_2>0$. Hence \eqref{eq:binary} has a nonnegative solution if and only if $\lambda\ge9$, in which case the roots are positive (their sum $a_2(\lambda-3)/b_2>0$). For $\lambda=9$ the root is double, equal to
\[
M_0=-\frac{3a_2b_1-a_1b_2}{2b_1b_2}=\frac{3a_2}{b_2}>0,
\]
using $a_1b_2=9a_2b_1$.

\emph{Step 6: the case $0<\lambda<9$.} By Step 5 there is no $M_*\ge0$ solving \eqref{eq:binary}; combined with Steps 2--4, no sequence of $\{Q=0\}\setminus V_{G_{\rho_1}}$ accumulates on $V_{G_{\rho_1}}\setminus\{0\}$. Condition (b) holds, and Corollary \ref{cor:tube} yields the $\rho_2$-tube fibration.

\emph{Step 7: the case $\lambda>9$.} Let $M_0>0$ be one of the two (simple) positive roots of \eqref{eq:binary}; then $\partial_Mq(M_0,0)\ne0$, and the Implicit Function Theorem provides an analytic function $M(T)$, defined for $|T|$ small, with $M(0)=M_0$ and $q(M(T),T)=0$; by continuity $M(T)>0$ for small $T$. Fixing $X>0$ and letting $T\to0^+$, the points
\[
\Bigl(\sqrt X,\ \sqrt{M(T)X},\ \sqrt{TX}\Bigr)\in\{Q=0\}\setminus V_{G_{\rho_1}}
\]
converge to $\bigl(\sqrt X,\sqrt{M_0X},0\bigr)\in\{z=0\}\setminus\{0\}$. Condition (b) fails.

\emph{Step 8: the borderline case $\lambda=9$.} Since $M_0$ is a double root, $q(M,0)=b_1b_2(M-M_0)^2$, and substituting $M=M_0+u$ in \eqref{eq:qMT} gives the \emph{exact} identity
\begin{equation}\label{eq:qexact}
q(M_0+u,T)=b_1b_2\,u^2+3(b_2c_1-b_1c_2)\,uT+\kappa\,T-c_1c_2T^2,
\end{equation}
where
\[
\kappa:=(a_2c_1-a_1c_2)+3(b_2c_1-b_1c_2)\,M_0 .
\]
Substituting $M_0=3a_2/b_2$ and using $9a_2b_1/b_2=a_1$ (from $\lambda=9$), one computes
\[
\kappa=(a_2c_1-a_1c_2)+9a_2c_1-\tfrac{9a_2b_1}{b_2}c_2=10a_2c_1-2a_1c_2=2(5a_2c_1-a_1c_2).
\]
Since \eqref{eq:qexact} is an honest quadratic in $u$, the existence of real solutions $M=M_0+u$ of $q(M,T)=0$, for fixed $T>0$, is decided by its discriminant
\[
D(T)=9(b_2c_1-b_1c_2)^2T^2-4b_1b_2\bigl(\kappa T-c_1c_2T^2\bigr)
=-4b_1b_2\kappa\,T+\bigl[9(b_2c_1-b_1c_2)^2+4b_1b_2c_1c_2\bigr]T^2,
\]
whose $T^2$-coefficient is strictly positive.

If $\kappa>0$ (i.e.\ $a_1c_2<5a_2c_1$), then $D(T)<0$ for $0<T<T_1:=\dfrac{4b_1b_2\kappa}{9(b_2c_1-b_1c_2)^2+4b_1b_2c_1c_2}$; since $q(\cdot,T)$ is quadratic in $M$ with discriminant $D(T)$, it has \emph{no real roots at all} for such $T$. Along any putative accumulating sequence we have $T_n\to0$ (Step 3), so eventually $T_n<T_1$ and $q(M_n,T_n)=0$ is impossible. Together with Steps 2--3, condition (b) holds; Corollary \ref{cor:tube} gives the $\rho_2$-tube fibration.

If $\kappa\le0$ (i.e.\ $a_1c_2\ge5a_2c_1$), then $D(T)>0$ for all small $T>0$: for $\kappa<0$ both summands of $D(T)$ are positive, while for $\kappa=0$ one has $D(T)=\bigl[9(b_2c_1-b_1c_2)^2+4b_1b_2c_1c_2\bigr]T^2>0$. The quadratic formula then provides real roots
\[
u_\pm(T)=\frac{-3(b_2c_1-b_1c_2)T\pm\sqrt{D(T)}}{2b_1b_2}=O\bigl(\sqrt T\,\bigr)\xrightarrow[T\to0^+]{}0,
\]
so $M(T):=M_0+u_+(T)>0$ for small $T>0$ and $q(M(T),T)=0$. As in Step 7, this produces points of $\{Q=0\}\setminus V_{G_{\rho_1}}$ accumulating at $\bigl(\sqrt X,\sqrt{M_0X},0\bigr)\ne0$, and condition (b) fails.

\emph{Step 9: non-existence of the tube fibration in the failure cases.} All the data $G_1$ (degree $2$), $G_2$ (degree $4$) and $\rho_2$ (degree $2$) are homogeneous, so Lemma \ref{lem:conic} applies whenever condition (b) fails, excluding the Milnor fibration on the $\rho_2$-tube. This completes the proof.
\end{proof}

\begin{remark}\label{rem:threshold}
Example \ref{ex:nine} is the borderline case of Theorem \ref{thm:criterion}: there $\rho_1=9x^2+y^2+z^2$ ($a_1=9$, $b_1=c_1=1$) and $\rho_2=\rho_E$ ($a_2=b_2=c_2=1$), so $\lambda=9$ with $a_1c_2=9\ge5=5a_2c_1$, and the criterion predicts the failure of condition (b) for $\rho_E$---in agreement with the direct computation, which located the accumulation exactly along $y=\pm\sqrt3\,x$, i.e.\ at the double root $M_0=3$ of \eqref{eq:binary}.
\end{remark}

\begin{example}\label{ex:window}
Let $G=(xz,\,yz(x^2+y^2+z^2))$, i.e.\ $\rho_1=\rho_E$, tested against $\rho_2=x^2+4y^2+z^2$. Then $\lambda=4\in(0,9)$ and Theorem \ref{thm:criterion}(i) gives $\rho_2$-regularity. Directly: the binary form \eqref{eq:binary} becomes $4M_*^2-M_*+1=0$, with discriminant $1-16=-15<0$, confirming the absence of real accumulation directions.
\end{example}

\section{Quasi-homogeneous germs}\label{sec:qh}

\subsection{Resonant Euclidean incompatibility}

For germs whose second component involves higher even powers, the obstruction to Euclidean regularity is governed by a resonance between the coefficients and the exponents, as announced in the introduction.

\begin{theorem}\label{thm:resonant}
Let $H(x,y,z)=ax^{2s}+by^{2s}+z^2$ with $a,b>0$ and an integer $s\ge2$, let $G=(xz,\,yzH)$, and let $\rho_E=x^2+y^2+z^2$. If
\[
\frac ab=\frac{s+1}{s-1},
\]
then the Milnor condition (b) associated with $\rho_E$ fails: for every $u\ne0$ small, the points $(u,\pm u,0)$ belong to $\clo{M_{\rho_E}(G)\setminus V_G}\cap V_G$.
\end{theorem}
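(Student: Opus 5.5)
The plan is to compute the Milnor set $M_{\rho_E}(G)$ explicitly, as in Proposition \ref{prop:family} and Theorem \ref{thm:criterion}, and then produce accumulating points by the Implicit Function Theorem, exactly as in Step 7 of Theorem \ref{thm:criterion}.

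\emph{Step 1: the Milnor set.} Write $G_1=xz$ and $G_2=yzH$. Then $\nabla G_2=(yzH_x,\;zH+yzH_y,\;yH+2yz^2)$, using $H_z=2z$. Expanding $\det(\nabla G_1,\nabla G_2,\nabla\rho_E)$ along the row $\nabla G_1=(z,0,x)$ should give $2z\,B(x,y,z)$ with
\[
B=z^2H+yz^2H_y-y^2H-2y^2z^2+xy^2H_x-x^2H-x^2yH_y .
\]
Hence $M_{\rho_E}(G)=\{z=0\}\cup\{B=0\}$. Note that $B$ depends on $z$ only through $z^2$ and is even in $y$. The $\pm$ in the statement therefore follows from the $+$ case via $y\mapsto -y$.

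\emph{Step 2: the restriction to $z=0$.} At $z=0$ the function reduces to
\[
B_0(x,y)=-(x^2+y^2)(ax^{2s}+by^{2s})+2sa\,x^{2s}y^2-2sb\,x^2y^{2s},
\]
which is homogeneous of degree $2s+2$. At $x=y=u$ one gets $B_0(u,u)=2u^{2s+2}\bigl[(s-1)a-(s+1)b\bigr]$. This vanishes precisely under the resonance $a/b=(s+1)/(s-1)$, so $(u,u,0)\in\{B=0\}$.

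\emph{Step 3: simple root and the IFT (main obstacle).} I would check that $\partial_yB(u,u,0)\neq0$. By homogeneity it suffices to differentiate $f(t)=B_0(1,t)$ at $t=1$. A direct computation gives
\[
f'(1)=4sa-2a-2b-4sb-4s^2b .
\]
Substituting $a=b(s+1)/(s-1)$ and multiplying by $(s-1)/b$, this becomes $4s(1+s-s^2)$. This is nonzero for every integer $s\ge2$, because $s^2-s-1$ has no integer roots.

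For fixed small $u\neq0$, the Implicit Function Theorem applied to $B(u,y,z)$ at $(y,z)=(u,0)$ then gives an analytic $y(z)$ with $y(0)=u$ and $B(u,y(z),z)=0$. For $z\neq0$ small and $y(z)\neq0$, the points $(u,y(z),z)$ lie in $M_{\rho_E}(G)\setminus V_G$, since $z\ne0$ and $(x,y)\ne0$ exclude both components $\{z=0\}$ and $\{x=y=0\}$ of $V_G$. These points converge to $(u,u,0)\in V_G\setminus\{0\}$, so condition (b) fails.

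The only delicate point is the verification that $t=1$ is a simple root of $f$; if it failed, I would instead fall back on a sign-change argument for $B$ in $y$ at small fixed $z>0$.
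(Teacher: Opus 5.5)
Your proposal is correct and follows the paper's proof essentially step by step. Your $B$ is the paper's $Q$, with $B_0=-F$. You check that it vanishes along $y=\pm x$ exactly under the resonance, show that $\partial_y B_0(u,u)=\frac{4sb(1+s-s^2)}{s-1}\,u^{2s+1}\neq0$, and conclude with the Implicit Function Theorem, just as the paper does. The differences are cosmetic: you read off the parity in $z$ and $y$ from the explicit formula rather than from the reflection $S$, and the paper shows the points lie outside $V_G$ more directly by noting $G_1=uz\neq0$.
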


\begin{proof}
Under the reflection $S(x,y,z)=(x,y,-z)$ one has $G\circ S=-G$ and $\rho_E\circ S=\rho_E$, whence $(\nabla G_i)\circ S=-S\nabla G_i$ and $(\nabla\rho_E)\circ S=S\nabla\rho_E$; since $\det S=-1$, the extended Jacobian determinant is odd in $z$. Writing it as $2z\,Q(x,y,z)$, the factor $Q$ is therefore \emph{even} in $z$, so that $Q(x,y,z)=Q(x,y,0)+O(z^2)$ with all correction terms carrying a factor $z^2$. A direct computation gives $Q(x,y,0)=-F(x,y)$ with
\[
F(x,y)=ax^{2s+2}-(2s-1)a\,x^{2s}y^2+(2s+1)b\,x^2y^{2s}+b\,y^{2s+2}.
\]
Along $y=x$,
\[
F(x,x)=\bigl[a-(2s-1)a+(2s+1)b+b\bigr]x^{2s+2}=2\bigl[(s+1)b-(s-1)a\bigr]x^{2s+2},
\]
which vanishes identically precisely under the resonance $a/b=(s+1)/(s-1)$; since $F$ is even in $y$, also $F(x,-x)\equiv0$.

Next,
\[
\partial_yF(u,u)=2u^{2s+1}\bigl[-(2s-1)a+(2s^2+2s+1)b\bigr]
=\frac{4bs(s^2-s-1)}{s-1}\,u^{2s+1},
\]
after substituting $a=(s+1)b/(s-1)$. As $s\ge2$ is an integer and the roots of $s^2-s-1$ are $(1\pm\sqrt5)/2$, we get $\partial_yF(u,u)\ne0$ for all $u\ne0$. Since $Q$ is even in $z$, $\partial_yQ(u,u,0)=-\partial_yF(u,u)\ne0$, and the Implicit Function Theorem produces an analytic curve $\varphi_u(z)$ with $\varphi_u(0)=u$ and $Q(u,\varphi_u(z),z)=0$ for small $z$. For $z\ne0$, the point $q_z=(u,\varphi_u(z),z)$ lies in $M_{\rho_E}(G)$ (as $\det=2zQ$ vanishes) and outside $V_G$ (as $G_1(q_z)=uz\ne0$), and $q_z\to(u,u,0)\in V_G\setminus\{0\}$ as $z\to0$. The case $y=-x$ is identical by evenness. Since $u$ is arbitrarily small, condition \eqref{eq:condb} fails.
\end{proof}

\begin{example}\label{ex:superell}
The germ $G=(xz,\,yz(3x^4+y^4+z^2))$ satisfies the resonance of Theorem \ref{thm:resonant} with $s=2$, $a=3$, $b=1$ (indeed $3=(2+1)/(2-1)$), so condition (b) fails for $\rho_E$. Here the limit curve factors completely,
\[
3x^6-9x^4y^2+5x^2y^4+y^6=(x-y)(x+y)\bigl(3x^4-6x^2y^2-y^4\bigr),
\]
so its real zero set is the union of the four lines $y^2=x^2$ and $y^2=(2\sqrt3-3)\,x^2$: in the variable $t=y^2/x^2$ the nonnegative roots are $t=1$ and $t=2\sqrt3-3\approx0.4641$, both simple, so the Implicit Function Theorem step in the proof of Theorem \ref{thm:resonant} applies along each of them. By contrast, for the adapted superellipsoidal control function $\rho_2=3x^4+y^4+z^2$, formula \eqref{eq:selfdet} gives $M_{\rho_2}(G)=\{z=0\}\cup\{z^2=6x^4+2y^4\}$, whose closure meets $V_G$ only at the origin; Proposition \ref{prop:family} (with $m=n=2$, $k=1$) then provides the Milnor fibration on the $\rho_2$-tube.
\end{example}

The superellipsoidal control is not, however, the only possible remedy. The resonance of Theorem \ref{thm:resonant} turns out to be a statement about the Euclidean \emph{normalisation} of the quadratic control, and the whole diagonal quadratic family can be analysed at once.

\begin{proposition}\label{prop:quadfix}
Let $H=ax^{2s}+by^{2s}+z^2$ with $a,b>0$ and $s\ge2$ an integer, let $G=(xz,\,yzH)$, and let
\[
\rho_{\alpha,\beta,\gamma}=\alpha x^2+\beta y^2+\gamma z^2,\qquad \alpha,\beta,\gamma>0 .
\]
Then $\det(\nabla G_1,\nabla G_2,\nabla\rho_{\alpha,\beta,\gamma})=2z\,Q$ where, in the variable $Z=z^2$,
\begin{equation}\label{eq:quadfix}
Q=-B(x,y)+L(x,y)\,Z+\gamma Z^2
\end{equation}
with
\begin{align}
B(x,y)&=a\alpha\,x^{2s+2}-(2s-1)a\beta\,x^{2s}y^2+(2s+1)b\alpha\,x^2y^{2s}+b\beta\,y^{2s+2},\label{eq:Bform}\\
L(x,y)&=\gamma a\,x^{2s}+(2s+1)\gamma b\,y^{2s}-\alpha x^2-3\beta y^2 .\label{eq:Lform}
\end{align}
Consequently $G$ satisfies the Milnor condition (b) associated with $\rho_{\alpha,\beta,\gamma}$ if and only if
\[
B(x,y)\ \ge\ 0\qquad\text{for all }(x,y).
\]
In particular the condition does not involve $\gamma$ and depends on $(\alpha,\beta)$ only through the ratio $\mu:=\alpha/\beta$. Moreover $B$ vanishes along $y=\pm x$ exactly when
\begin{equation}\label{eq:mustar}
\mu=\frac{(2s-1)a-b}{a+(2s+1)b},
\end{equation}
and under the resonance $a/b=(s+1)/(s-1)$ of Theorem \ref{thm:resonant} the right-hand side of \eqref{eq:mustar} equals $1$.
\end{proposition}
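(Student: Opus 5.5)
The plan is to compute the extended Jacobian determinant explicitly and then read condition (b) off the sign structure of $Q$, viewed as a quadratic in $Z=z^2$. For the determinant, I will proceed as in \eqref{eq:selfdet}. Write $\nabla G_2=yz\nabla H+(0,zH,yH)$ and $\nabla\rho_{\alpha,\beta,\gamma}=2(\alpha x,\beta y,\gamma z)$, and expand $\det(\nabla G_1,\nabla G_2,\nabla\rho_{\alpha,\beta,\gamma})$ along the first row $(z,0,x)$. The reflection argument in the proof of Theorem \ref{thm:resonant} shows that the result is $2z$ times an even function of $z$. Collecting the terms in $Z^0,Z^1,Z^2$ (using $H_x=2sax^{2s-1}$, $H_y=2sby^{2s-1}$, $H_z=2z$) should give \eqref{eq:quadfix}--\eqref{eq:Lform}. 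This is routine bookkeeping, and it can be checked against Theorem \ref{thm:resonant}: at $\alpha=\beta=\gamma=1$ one must recover $B=F$.

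Since $M=\{z=0\}\cup\{Q=0\}$, the key steps are the following. First, no accumulation occurs on the $z$-axis, because $Q(0,0,z_0)=\gamma z_0^4\neq0$. Second, fix a small punctured neighbourhood of the origin in which
\[
L(x,y)=-\alpha x^2-3\beta y^2+O(|(x,y)|^{2s})<0 \quad\text{for } (x,y)\neq0.
\]
This is possible because $s\ge2$. For fixed $(x,y)$ in this neighbourhood, $\gamma Z^2+LZ-B=0$ is a quadratic in $Z$. The product of its roots is $-B/\gamma$ and their sum is $-L/\gamma>0$.

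For the implication ``$B\ge0\Rightarrow$ (b)'': there is then exactly one root $Z_+\ge -L/\gamma>0$, and any other root is $\le0$. Along a sequence in $\{Q=0\}\setminus V_G$ converging to $(x_0,y_0,0)$ with $(x_0,y_0)\neq0$, we would have $Z_n=Z_+(x_n,y_n)\ge -L(x_n,y_n)/\gamma\to -L(x_0,y_0)/\gamma>0$. This contradicts $Z_n\to0$, so condition (b) holds.

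For the converse, suppose $B(x_1,y_1)<0$ somewhere. Note that $B(x,0)=a\alpha x^{2s+2}>0$ for $x\neq0$, and that $B$ is homogeneous of degree $2s+2$. Hence there is a ray through the origin on which $B<0$ and an adjacent ray on which $B=0$, and these rays contain points $(x_0,y_0)\ne0$ arbitrarily close to the origin. On the side where $B<0$, the discriminant $L^2+4\gamma B$ is positive near the origin, since $L^2$ is of order $4$ while $B$ is of order $2s+2\ge6$. The small root
\[
Z_-=\frac{-L-\sqrt{L^2+4\gamma B}}{2\gamma}\approx \frac{B}{L}>0
\]
therefore exists, and $Z_-\to0$ as $(x,y)\to(x_0,y_0)$. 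Choosing $x,y,z=\sqrt{Z_-}$ with $xz\ne0$ yields points of $M\setminus V_G$ accumulating on $(x_0,y_0,0)\in V_G\setminus\{0\}$, so condition (b) fails. I expect this direction to be the main obstacle: one must make sure that the accumulation point has $(x_0,y_0)\neq0$ and lies in the region where $L<0$, and that $B<0$ actually occurs arbitrarily near the origin. Homogeneity of $B$ is what resolves all three points.

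The remaining claims are direct. The criterion involves only $B$, which does not contain $\gamma$. Moreover $B$ is linear in $(\alpha,\beta)$, so its sign pattern depends only on $\mu=\alpha/\beta$. Setting $y=x$, the coefficient of $x^{2s+2}$ in $B(x,x)$ is $\alpha(a+(2s+1)b)-\beta((2s-1)a-b)$, which gives \eqref{eq:mustar}. Evenness of $B$ in $y$ handles $y=-x$. Finally, substituting $a=(s+1)b/(s-1)$, both the numerator and the denominator of \eqref{eq:mustar} become $2s^2b/(s-1)$, so $\mu=1$.
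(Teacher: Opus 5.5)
Your proposal is correct and follows essentially the same route as the paper. You expand the determinant along the first row and treat $Q$ as a quadratic in $Z$ with $L<0$ on a punctured neighbourhood of the origin. When $B\ge0$ you bound the positive root below by $-L/\gamma$; when $B$ takes negative values you use the small positive root $Z_-\approx B/L$ over directions accumulating on a boundary zero of $\{B<0\}$ on a small circle, and then finish with the same direct computations for $\gamma$, $\mu$ and the resonance.

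The differences are cosmetic: you use the quadratic formula and the comparison $r^4$ versus $r^{2s+2}$ where the paper uses $\gamma Z^2\ge -LZ$ and $B_n\to0$. You should still carry out the coefficient collection rather than calling it routine, for instance via the intermediate expression $Q=\gamma z^2H+\gamma yz^2H_y-\beta y^2H-\beta y^2zH_z+\beta xy^2H_x-\alpha x^2H-\alpha x^2yH_y$. You should also state explicitly that $V_G=\{z=0\}\cup\{x=y=0\}$, which your case analysis uses.
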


\begin{proof}
Write
\[
\nabla G_1=(z,\,0,\,x),
\qquad
\nabla G_2=(yzH_x,\ zH+yzH_y,\ yH+yzH_z).
\]
Expanding the extended Jacobian determinant along its first row and factoring out $2z$ gives $\det(\nabla G_1,\nabla G_2,\nabla\rho_{\alpha,\beta,\gamma})=2zQ$ with
\[
Q=\gamma z^2H+\gamma yz^2H_y-\beta y^2H-\beta y^2zH_z+\beta xy^2H_x-\alpha x^2H-\alpha x^2yH_y .
\]
Substituting $H=ax^{2s}+by^{2s}+Z$, $H_x=2sax^{2s-1}$, $H_y=2sby^{2s-1}$, $H_z=2z$ and collecting powers of $Z=z^2$ yields \eqref{eq:quadfix} with $B$ and $L$ as in \eqref{eq:Bform}--\eqref{eq:Lform}. In particular $B=-Q|_{z=0}$ and $Q(0,0,z)=\gamma z^4$.

Note next that $V_G=\{z=0\}\cup\{x=y=0\}$: if $z\ne0$ then $G_1=0$ forces $x=0$, and then $H=by^{2s}+z^2>0$, so $G_2=0$ forces $y=0$. Since $M_{\rho_{\alpha,\beta,\gamma}}(G)=\{z=0\}\cup\{Q=0\}$ and $\{z=0\}\subset V_G$, condition (b) asks precisely that no sequence of points of $\{Q=0\}$ with $z\ne0$ and $(x,y)\ne(0,0)$ converge to a point of $V_G\setminus\{0\}$. On the $z$-axis $Q=\gamma z^4\ne0$ for $z\ne0$, so such a limit point must have the form $(x_0,y_0,0)$ with $(x_0,y_0)\ne(0,0)$, and then necessarily $B(x_0,y_0)=0$. This necessary condition is \emph{not} sufficient: whether the zero of $B$ is actually reached is decided by the transverse term $L$.

Since $2s\ge4$, the monomials $x^{2s}$ and $y^{2s}$ in \eqref{eq:Lform} are $o(x^2)$ and $o(y^2)$ as $(x,y)\to0$. Hence there are $\delta>0$ and $c>0$ with
\begin{equation}\label{eq:Lbound}
L(x,y)\le-c\,(x^2+y^2)\qquad\text{for }0<\norm{(x,y)}<\delta ;
\end{equation}
one may take $c=\tfrac12\min\{\alpha,3\beta\}$.

\emph{Sufficiency.} Assume $B\ge0$ everywhere, and let $(x,y,z)$ satisfy $Q=0$ with $Z=z^2>0$ and $0<\norm{(x,y)}<\delta$. By \eqref{eq:quadfix} and \eqref{eq:Lbound},
\[
\gamma Z^2=B(x,y)-L(x,y)\,Z\ \ge\ -L(x,y)\,Z\ \ge\ c\,(x^2+y^2)\,Z ,
\]
whence $Z\ge(c/\gamma)(x^2+y^2)$. A sequence of such points converging to $(x_0,y_0,0)$ with $(x_0,y_0)\ne(0,0)$ would have $Z\to0$ while $(c/\gamma)(x^2+y^2)\to(c/\gamma)(x_0^2+y_0^2)>0$, which is impossible. Condition (b) holds.

\emph{Necessity.} Assume $B(x_1,y_1)<0$ for some $(x_1,y_1)$. As $B$ is homogeneous of degree $2s+2$, this is a property of the direction, so we may take $\norm{(x_1,y_1)}=r$ for any prescribed $0<r<\delta$. On the coordinate axes $B(x,0)=a\alpha x^{2s+2}>0$ and $B(0,y)=b\beta y^{2s+2}>0$, so on the circle of radius $r$ the continuous function $B$ changes sign; choose a point $(x_0,y_0)$ of that circle with $B(x_0,y_0)=0$ which is a limit of points $(x_n,y_n)$ of the circle with $B(x_n,y_n)<0$. Put $B_n:=B(x_n,y_n)<0$ and $L_n:=L(x_n,y_n)\le-cr^2<0$. The quadratic $\gamma Z^2+L_nZ-B_n$ has product of roots $-B_n/\gamma>0$ and sum $-L_n/\gamma>0$, and its discriminant $L_n^2+4\gamma B_n$ is positive once $|B_n|$ is small enough; it therefore has two positive roots, the smaller being
\[
Z_n=\frac{-L_n-\sqrt{L_n^2+4\gamma B_n}}{2\gamma}=\frac{B_n}{L_n}+O(B_n^2)\ \xrightarrow[\ n\to\infty\ ]{}\ 0 .
\]
The points $\bigl(x_n,y_n,\sqrt{Z_n}\bigr)$ lie in $\{Q=0\}\setminus V_G$ and converge to $(x_0,y_0,0)\in V_G\setminus\{0\}$, so condition (b) fails.

The independence of $\gamma$ and the homogeneity in $(\alpha,\beta)$ are immediate from \eqref{eq:Bform}. Finally, putting $y=\pm x$ in $B$ gives $x^{2s+2}\bigl[a\alpha-(2s-1)a\beta+(2s+1)b\alpha+b\beta\bigr]$, which vanishes precisely under \eqref{eq:mustar}. Substituting $a=s+1$ and $b=s-1$ there,
\[
\frac{(2s-1)(s+1)-(s-1)}{(s+1)+(2s+1)(s-1)}=\frac{2s^2}{2s^2}=1 . \qedhere
\]
\end{proof}

\begin{remark}\label{rem:threshold42}
For the germ of Example \ref{ex:superell} ($s=2$, $a=3$, $b=1$) Proposition \ref{prop:quadfix} is completely explicit. Normalising $\beta=1$ and writing $\mu=\alpha$, the form \eqref{eq:Bform} becomes $B(x,y)=x^6p(y^2/x^2)$ with
\[
p(t)=t^3+5\mu t^2-9t+3\mu ,
\]
and since $p(t)\ge0$ is equivalent to $\mu\,(5t^2+3)\ge9t-t^3$, condition (b) associated with $\rho_{\mu,1,\gamma}$ holds if and only if
\[
\mu\ \ge\ \mu_0:=\max_{t\ge0}\frac{t\,(9-t^2)}{5t^2+3}=\frac{3}{25}\sqrt{40\sqrt6-15}=1.0931176\ldots,
\]
the positive root of $125u^4+54u^2-243=0$, the maximum being attained at $t_0^2=(12\sqrt6-27)/5$. The limiting value $\mu=\mu_0$ is \emph{included}: there $p$ has a double zero at $t_0$ and remains nonnegative, so $B$ vanishes along the pair of lines $y^2=t_0x^2$ without ever becoming negative, and by Proposition \ref{prop:quadfix} condition (b) still holds --- no branch of $\{Q=0\}$ with $z\ne0$ reaches those lines. The Euclidean control is the case $\mu=1$, which lies strictly below the threshold: there $p(t)=(t-1)(t^2+6t-3)$ does take negative values, in agreement with Theorem \ref{thm:resonant}. By contrast $\tilde\rho=4x^2+y^2+z^2$ has $\mu=4>\mu_0$; explicitly
\[
B(x,y)=12x^6-9x^4y^2+20x^2y^4+y^6=x^6\bigl(t^3+20t^2-9t+12\bigr),\qquad Q(0,0,z)=z^4,
\]
and $t^3+20t^2-9t+12>0$ for $t\ge0$ (for $0\le t\le9/20$ already $12-9t>0$, and for $t\ge9/20$ one has $20t^2-9t\ge0$), so condition (b) holds for $\tilde\rho$. Moreover the $2\times2$ minors of $dG$ are $z^2(3x^4+5y^4+z^2)$, $-yz(9x^4-y^4-3z^2)$ and $-xz(3x^4+5y^4+z^2)$, so $\Sing G=\{z=0\}$ and $\Disc(G)=\{0\}$, and Corollary \ref{cor:tube} shows that $G$ already admits a Milnor fibration on the $\tilde\rho$-tube. Thus the Euclidean obstruction in Example \ref{ex:superell} is not intrinsic to the exponents: a quadratic reweighting of the metric removes it as soon as the anisotropy $\mu$ reaches the threshold $\mu_0$. What the superellipsoidal $\rho_2$ contributes is that it is adapted to the weights of $G$ and needs no threshold at all.
\end{remark}

\subsection{Confinement to the singular locus}
As Example \ref{ex:nine} shows, quasi-homogeneity of the pair $(G,\rho_w)$ together with an isolated critical value does \emph{not} imply condition (b): the germ there is homogeneous with isolated critical value, $\rho_E$ is homogeneous with the same weights, and (b) fails. The Euler relations do, however, confine the possible accumulation to the singular locus. This is the confinement announced in the introduction; note that in Example \ref{ex:nine} the accumulation lines $\{y=\pm\sqrt3x,\ z=0\}$ do lie inside $\Sing G=\{z=0\}$, as the theorem predicts.

\begin{theorem}\label{thm:confinement}
Let $G=(G_1,G_2):(\R^3,0)\to(\R^2,0)$ be a real analytic germ whose components are quasi-homogeneous of degrees $d_1,d_2>0$ with respect to a weight vector $w=(w_x,w_y,w_z)$ with positive entries, and let $\rho_w$ be quasi-homogeneous of degree $d_\rho>0$ with respect to $w$ and strictly positive off the origin. Then
\[
\clo{M_{\rho_w}(G)\setminus V_G}\cap V_G\ \subseteq\ \Sing G\cap V_G.
\]
\end{theorem}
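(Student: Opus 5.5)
The plan is to argue by contradiction at a point of $V_G\setminus\Sing G$, combining the Gram-matrix argument from the proof of Proposition \ref{prop:icis} with the weighted Euler relations. Let
\[
E(x,y,z)=(w_x x,\,w_y y,\,w_z z)
\]
be the weighted Euler field. Quasi-homogeneity gives $\langle\nabla G_i,E\rangle=d_iG_i$ for $i=1,2$ and $\langle\nabla\rho_w,E\rangle=d_\rho\,\rho_w$. The whole proof rests on pairing a linear dependence among these gradients with $E$.

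\emph{Setting up the contradiction.} Suppose $x_0\in\clo{M_{\rho_w}(G)\setminus V_G}\cap V_G$ but $x_0\notin\Sing G$. I first treat $x_0\neq0$, which is the content of the statement as a germ inclusion off the origin, in the sense already used in \eqref{eq:condb}. Choose $p_n\in M_{\rho_w}(G)\setminus V_G$ with $p_n\to x_0$. Since $\Sing G$ is closed, $dG$ has rank $2$ on a neighborhood $W$ of $x_0$. There the Gram matrix $\Gamma=(\langle\nabla G_i,\nabla G_j\rangle)$ is invertible with continuous inverse. For large $n$ we have $p_n\in W$, so membership in $M_{\rho_w}(G)$ forces
\[
\nabla\rho_w(p_n)=a_1(p_n)\nabla G_1(p_n)+a_2(p_n)\nabla G_2(p_n),\qquad a=\Gamma^{-1}\beta,\quad \beta_i=\langle\nabla\rho_w,\nabla G_i\rangle .
\]
In particular $a(p_n)$ stays bounded, since it converges to $\Gamma(x_0)^{-1}\beta(x_0)$.

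\emph{Pairing with the Euler field.} Taking the inner product of the displayed identity with $E(p_n)$ gives
\[
d_\rho\,\rho_w(p_n)=a_1(p_n)\,d_1G_1(p_n)+a_2(p_n)\,d_2G_2(p_n).
\]
As $n\to\infty$ the right-hand side tends to $0$, because the coefficients are bounded and $G(p_n)\to G(x_0)=0$. Hence $d_\rho\,\rho_w(x_0)=0$. Since $d_\rho>0$ and $\rho_w$ is strictly positive off the origin, this contradicts $x_0\ne0$, and so $x_0\in\Sing G$.

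\emph{The origin.} If $0\in\Sing G$ there is nothing to prove. If $dG(0)$ has rank $2$, then $G$ is a submersion near $0$ and the inclusion is read, as for \eqref{eq:condb}, modulo the origin.

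The only delicate step is keeping the coefficients $a_i$ bounded. This is exactly where the hypothesis $x_0\notin\Sing G$ enters, through the invertibility of $\Gamma(x_0)$. Without it the $a_i$ could blow up and the Euler identity would no longer force $\rho_w(x_0)=0$. This is consistent with Example \ref{ex:nine}, where accumulation does occur inside $\Sing G$.
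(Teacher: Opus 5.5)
Your proof is correct and uses the same key idea as the paper: pair the linear dependence of $\nabla G_1,\nabla G_2,\nabla\rho_w$ at $p_n$ with the weighted Euler field, so that in the limit $d_\rho\rho_w(x_0)=0$, which is impossible for $x_0\neq0$. The paper argues directly, normalizing the multipliers $(\lambda_1,\lambda_2,\lambda_3)$ to unit length and showing the limit has $\lambda_3=0$, whereas you argue by contradiction and bound the coefficients with the Gram matrix as in Proposition \ref{prop:icis}; your remark that the inclusion must be read modulo the origin (e.g.\ when $dG(0)$ has rank $2$) is also accurate, since the paper's proof likewise only treats $x_0\neq0$.
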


\begin{proof}
Let $E=w_xx\partial_x+w_yy\partial_y+w_zz\partial_z$ be the Euler vector field, so that $\langle\nabla G_i,E\rangle=d_iG_i$ and $\langle\nabla\rho_w,E\rangle=d_\rho\rho_w$. Let $p_n\in M_{\rho_w}(G)\setminus V_G$ with $p_n\to x_0\in V_G\setminus\{0\}$. Choose multipliers $(\lambda_1^{(n)},\lambda_2^{(n)},\lambda_3^{(n)})$ with
\[
\lambda_1^{(n)}\nabla G_1(p_n)+\lambda_2^{(n)}\nabla G_2(p_n)+\lambda_3^{(n)}\nabla\rho_w(p_n)=0,
\qquad
(\lambda_1^{(n)})^2+(\lambda_2^{(n)})^2+(\lambda_3^{(n)})^2=1,
\]
and pass to a subsequence with $\lambda^{(n)}\to\lambda$, $\norm\lambda=1$. Pairing the relation with $E(p_n)$ and using the Euler identities,
\[
\lambda_1^{(n)}d_1G_1(p_n)+\lambda_2^{(n)}d_2G_2(p_n)+\lambda_3^{(n)}d_\rho\rho_w(p_n)=0.
\]
Since $G(p_n)\to0$ and $\rho_w(p_n)\to\rho_w(x_0)>0$, the limit reads $\lambda_3d_\rho\rho_w(x_0)=0$, whence $\lambda_3=0$ and $(\lambda_1,\lambda_2)\ne(0,0)$. Passing to the limit in the original relation gives $\lambda_1\nabla G_1(x_0)+\lambda_2\nabla G_2(x_0)=0$, i.e.\ $x_0\in\Sing G$.
\end{proof}

\section{The fibration on the \texorpdfstring{$\rho$}{rho}-sphere}\label{sec:sphere}

Let $G:(\R^m,0)\to(\R^p,0)$, $m>p\ge2$, and let $\rho$ be an admissible analytic control function. Off $V_G$ we define the projection
\[
\Psi_G(x)=\frac{G(x)}{\norm{G(x)}}\in S^{p-1},
\]
and for small $\varepsilon>0$ we write $K_{\rho,\varepsilon}:=S^{m-1}_{\rho,\varepsilon}\cap V_G$ for the link.

\begin{definition}\label{def:spherefib}
The germ $G$ \emph{admits a Milnor fibration on the $\rho$-sphere} if, for every sufficiently small $\varepsilon>0$, the restriction
\begin{equation}\label{eq:spherefib}
\Psi_G:\ S^{m-1}_{\rho,\varepsilon}\setminus K_{\rho,\varepsilon}\longrightarrow S^{p-1}
\end{equation}
is a smooth locally trivial fibration.
\end{definition}

\begin{definition}\label{def:angularmilnor}
The \emph{Milnor set of the projection associated with $\rho$} is
\[
M_\rho(\Psi_G):=\bigl\{x\notin V_G:\ \Psi_G\big|_{\rho^{-1}(\rho(x))}\ \text{is not a submersion at } x\bigr\}.
\]
We say $\Psi_G$ is \emph{$\rho$-regular} if $M_\rho(\Psi_G)\cap B^m_{\rho,\varepsilon_0}=\emptyset$ for some $\varepsilon_0>0$.
\end{definition}

Since any smooth locally trivial fibration is a submersion, the existence of \eqref{eq:spherefib} for all small $\varepsilon$ forces the $\rho$-regularity of $\Psi_G$; the content of Theorem \ref{thm:sphere} below is the converse, under condition (b).

When $p=2$, criticality of the projection admits a convenient vector calculus description in $\R^3$.

\begin{lemma}\label{lem:p2}
Let $m=3$, $p=2$, and identify $\Psi_G=e^{i\theta}$ with $\theta=\arg(G_1+iG_2)$, defined locally off $V_G$. Then $\nabla\theta=X/\norm G^2$ with $X:=G_1\nabla G_2-G_2\nabla G_1$, and, for $x\notin V_G$,
\[
x\in M_\rho(\Psi_G)\iff X(x)\times\nabla\rho(x)=0.
\]
\end{lemma}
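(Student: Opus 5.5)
The plan is to compute $d\Psi_G$ through the angle function and then reduce the submersion condition on the level set to a linear-algebra statement in $\R^3$. Off $V_G$ we have $G\ne0$, so locally $\theta=\arg(G_1+iG_2)$ is a well-defined smooth function, and $\Psi_G=(\cos\theta,\sin\theta)$. Differentiating $\theta=\arctan(G_2/G_1)$ (or the corresponding branch where $G_1=0$), or equivalently differentiating $\tan\theta=G_2/G_1$, gives
\[
d\theta=\frac{G_1\,dG_2-G_2\,dG_1}{G_1^2+G_2^2},
\]
so $\nabla\theta=X/\norm G^2$ with $X=G_1\nabla G_2-G_2\nabla G_1$. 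This formula is local but independent of the branch, since different branches differ by constants.

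Next, note that $S^1$ is one-dimensional and the map $t\mapsto e^{it}$ is a local diffeomorphism. Hence $\Psi_G$, restricted to the surface $L:=\rho^{-1}(\rho(x))$, fails to be a submersion at $x$ exactly when $d\theta(x)$ vanishes on $T_xL$. Here we use $x\ne0$, so that $\nabla\rho(x)\ne0$ and $L$ is a smooth surface near $x$ with $T_xL=\nabla\rho(x)^\perp$.

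The condition that $d\theta(x)$ vanishes on $\nabla\rho(x)^\perp$ means that $\nabla\theta(x)$ is parallel to $\nabla\rho(x)$, i.e.\ lies in $\R\nabla\rho(x)$. Since $\norm G^2>0$, this is equivalent to $X(x)\in\R\nabla\rho(x)$. Because $\nabla\rho(x)\ne0$, this in turn is equivalent to $X(x)\times\nabla\rho(x)=0$ in $\R^3$, since the cross product of two vectors vanishes exactly when they are linearly dependent. This proves both implications at once.

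The only point needing care is the case $x=0$, which is excluded because $0\in V_G$. We also use that condition (c) for $\rho$ guarantees $\nabla\rho\ne0$ on the relevant small levels. Beyond that, the argument is a routine unwinding of the definitions, and we do not expect a genuine obstacle.
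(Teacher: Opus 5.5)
Your argument is correct and matches the paper's proof: you compute $d\theta=(G_1\,dG_2-G_2\,dG_1)/\norm G^2$, note that with a one-dimensional target the restriction fails to be a submersion exactly when $d\theta(x)$ vanishes on $T_xL=\nabla\rho(x)^\perp$, and then express the parallelism of $X(x)$ and $\nabla\rho(x)$ as the vanishing of the cross product. You also say explicitly that $\nabla\rho(x)\ne0$ is needed, both for the level to be smooth at $x$ and for linear dependence to mean $X(x)\in\R\nabla\rho(x)$; the paper leaves this implicit.
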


\begin{proof}
Locally $d\theta=(G_1\,dG_2-G_2\,dG_1)/\norm G^2$, whence the gradient formula. The target being one-dimensional, $\Psi_G$ restricted to the level $\rho^{-1}(\rho(x))$ fails to be a submersion at $x$ exactly when $d\theta$ annihilates the tangent space of the level, i.e.\ when $\nabla\theta(x)$ is a multiple of $\nabla\rho(x)$ (including $\nabla\theta(x)=0$); in $\R^3$ this is the vanishing of the cross product.
\end{proof}

\begin{theorem}\label{thm:sphere}
Let $G:(\R^m,0)\to(\R^p,0)$, $m>p\ge2$, be real analytic with $\Disc(G)\subseteq\{0\}$, $\dim V_G>0$, and suppose $G$ satisfies the Milnor condition (b) associated with $\rho$. Then $\Psi_G$ is $\rho$-regular if and only if, for every sufficiently small $\varepsilon>0$, the restriction \eqref{eq:spherefib} is a smooth locally trivial fibration.
\end{theorem}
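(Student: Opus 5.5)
The forward implication ("fibration for all small $\varepsilon$ $\Rightarrow$ $\rho$-regularity") is the easy one. A smooth locally trivial fibration is a submersion. So if \eqref{eq:spherefib} is a fibration for every $0<\varepsilon\le\varepsilon_0$, then $\Psi_G|_{S^{m-1}_{\rho,\varepsilon}}$ is submersive at every point off $K_{\rho,\varepsilon}$. Since $B^m_{\rho,\varepsilon_0}\setminus\{0\}$ is the union of these levels, $M_\rho(\Psi_G)\cap B^m_{\rho,\varepsilon_0}=\emptyset$.

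For the converse I plan the classical Milnor/Ehresmann argument, split into a region near the link and a region far from it. Fix $\varepsilon$ small.

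\emph{Near the link.} Condition (b) and Lemma \ref{lem:btosub} give $\eta>0$ such that $G|_{S^{m-1}_{\rho,\varepsilon}}$ is a submersion on $N:=S^{m-1}_{\rho,\varepsilon}\cap G^{-1}(\clo B^p_\eta\setminus\{0\})$. Composing with the radial projection $\clo B^p_\eta\setminus\{0\}\to S^{p-1}$, which is itself a submersion, shows that $\Psi_G$ is a submersion on $N$. Moreover, on $N$ the map $(\norm G,\Psi_G)$ is a submersion onto $(0,\eta]\times S^{p-1}$, and its restriction to $\partial N=S^{m-1}_{\rho,\varepsilon}\cap G^{-1}(S^{p-1}_\eta)$ is proper. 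Ehresmann for proper submersions of manifolds with boundary then identifies $N$ with $(0,\eta]\times\partial N$ over $S^{p-1}$, with $\Psi_G$ corresponding to the projection $\partial N\to S^{p-1}$. The one point to check is that this projection is a locally trivial fibration: $G|_{\partial N}$ is a proper submersion onto $S^{p-1}_\eta$, and $\partial N$ is compact.

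\emph{Away from the link.} On the compact manifold with boundary $C:=S^{m-1}_{\rho,\varepsilon}\cap\{\norm G\ge\eta\}$, $\rho$-regularity of $\Psi_G$ makes $\Psi_G|_C$ a submersion. Its restriction to $\partial C=\partial N$ is also a submersion, by the previous step. Being proper, $\Psi_G|_C$ is a locally trivial fibration by Ehresmann with boundary.

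\emph{Gluing.} $S^{m-1}_{\rho,\varepsilon}\setminus K_{\rho,\varepsilon}=C\cup N$, glued along $\partial N$, and the two pieces are compatible over $S^{p-1}$. To obtain a single trivialization I will build a vector field. Over a chart $U\subset S^{p-1}$, lift coordinate vector fields to $S^{m-1}_{\rho,\varepsilon}\setminus K_{\rho,\varepsilon}$ so that they are $\Psi_G$-related and, on $N$, tangent to the levels of $\norm G$. This is possible because $(\norm G,\Psi_G)$ is a submersion there. Take these lifts on $N$, arbitrary lifts on the interior of $C$, and patch with a partition of unity. The flows are then complete: on $N$ they preserve $\norm G$, so they never approach $K_{\rho,\varepsilon}$, and elsewhere the domain is compact. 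These flows give local trivializations of \eqref{eq:spherefib}. Alternatively, one can note that $\Psi_G$ is a submersion whose restriction to $\{\norm G\le\eta'\}$ is a product collar, and apply the Ehresmann version for submersions that are proper relative to a control function.

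The main obstacle is the non-compactness near $K_{\rho,\varepsilon}$, where $\Psi_G$ is not proper. The whole argument depends on condition (b) providing the collar structure by making $(\norm G,\Psi_G)$ a submersion on $N$. Checking that the lifts can be chosen tangent to the levels of $\norm G$, so that their flows stay uniformly away from the link, is the step I expect to require the most care. The hypotheses $\Disc(G)\subseteq\{0\}$ and $\dim V_G>0$ enter, respectively, to make $G|_{\partial N}$ a genuine submersion, and to make the link nonempty (if it were empty, the argument would reduce to the compact case).
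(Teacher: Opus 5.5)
Your proposal is correct, and its decomposition matches the paper's. The near-link piece $N$ is handled as in the paper, via Lemma~\ref{lem:btosub} and the polar splitting $\clo B^p_\eta\setminus\{0\}\cong S^{p-1}\times(0,\eta]$. On the outer piece $C$ (the paper's $W$) you make the same two checks, taking submersivity on $\partial C$ from the inner part.

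Where you genuinely diverge is the gluing. The paper treats the inner and outer pieces as two fibrations over $S^{p-1}$ that agree on $\partial W$. It glues them along collars compatible with the projection and defers the details to \cite[Theorem 1.3]{ACT13}. You instead run a single Ehresmann argument for the non-proper submersion $\Psi_G$ on all of $S^{m-1}_{\rho,\varepsilon}\setminus K_{\rho,\varepsilon}$, and the levels of $\norm G$ control the completeness of the lifted flows.

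Your route is self-contained and never has to construct compatible collars. It also makes your separate Ehresmann-with-boundary step on $C$, and the boundary check there, superfluous. All it really uses is that $\Psi_G$ is submersive off the link, which $\rho$-regularity alone gives. It also uses that $(\norm G,\Psi_G)$ is submersive near the link, which is where condition (b) enters. The paper's route is more modular, but it outsources exactly the delicate step.

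One precision for the write-up: after patching, the field need not preserve $\norm G$ on all of $N$. Choose the partition of unity so that the $C$-part is supported in $\{\norm G>\eta'\}$ for some $0<\eta'<\eta$. Then every level $\{\norm G=c\}$ with $c\le\eta'$ is invariant, so trajectories starting there stay on a compact level. Trajectories starting in $\{\norm G\ge\eta'\}$ cannot cross the invariant level $\eta'$, so they stay in a compact set. This gives the completeness you need over each chart of $S^{p-1}$.
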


\begin{proof}
($\Leftarrow$) If \eqref{eq:spherefib} is a locally trivial fibration for all small $\varepsilon$, it is in particular a submersion at every point of $S^{m-1}_{\rho,\varepsilon}\setminus K_{\rho,\varepsilon}$; since every $q\in B^m_{\rho,\varepsilon_0}\setminus V_G$ lies on the $\rho$-sphere of level $\rho(q)$, we get $M_\rho(\Psi_G)\cap B^m_{\rho,\varepsilon_0}=\emptyset$.

($\Rightarrow$) Fix $\varepsilon>0$ small. By Lemma \ref{lem:btosub} there is $0<\eta\ll\varepsilon$ such that
\begin{equation}\label{eq:collarfib}
G\big|:\ S^{m-1}_{\rho,\varepsilon}\cap G^{-1}\bigl(\clo B^p_\eta\setminus\{0\}\bigr)\longrightarrow \clo B^p_\eta\setminus\{0\}
\end{equation}
is a submersion; it is proper (preimages of compacta are closed subsets of the compact $S^{m-1}_{\rho,\varepsilon}$), so it is a smooth locally trivial fibration by Ehresmann's theorem. Under the polar diffeomorphism $\clo B^p_\eta\setminus\{0\}\cong S^{p-1}\times(0,\eta]$, the composition of \eqref{eq:collarfib} with the first projection is again locally trivial: for a contractible open $U\subset S^{p-1}$, the fibration \eqref{eq:collarfib} is trivial over the contractible set $U\times(0,\eta]$, and composing with $\mathrm{pr}_1$ exhibits local triviality over $U$ with fiber $(0,\eta]\times F$. Thus
\begin{equation}\label{eq:innerpart}
\frac{G}{\norm G}\Big|:\ S^{m-1}_{\rho,\varepsilon}\cap G^{-1}\bigl(\clo B^p_\eta\setminus\{0\}\bigr)\longrightarrow S^{p-1}
\end{equation}
is a locally trivial fibration; restricting over the boundary sphere $S^{p-1}_\eta$ we obtain the same conclusion for
\begin{equation}\label{eq:boundarypart}
\frac{G}{\norm G}\Big|:\ S^{m-1}_{\rho,\varepsilon}\cap G^{-1}\bigl(S^{p-1}_\eta\bigr)\longrightarrow S^{p-1}.
\end{equation}
On the outer region, consider
\begin{equation}\label{eq:outerpart}
\Psi_G\big|:\ W:=S^{m-1}_{\rho,\varepsilon}\setminus G^{-1}\bigl(B^p_\eta\bigr)\longrightarrow S^{p-1},
\end{equation}
whose domain $W$ is a compact manifold with boundary $\partial W=S^{m-1}_{\rho,\varepsilon}\cap G^{-1}(S^{p-1}_\eta)$ and avoids $V_G$, since $V_G\subset G^{-1}(B^p_\eta)$. Ehresmann's theorem for manifolds with boundary requires two separate conditions, which we verify in turn.

(i) \emph{$\Psi_G|_W$ is a submersion.} For $x\in W$ one has $T_xW=T_xS^{m-1}_{\rho,\varepsilon}$, and $x\notin V_G$ lies on the $\rho$-sphere of level $\rho(x)=\varepsilon$. By $\rho$-regularity of $\Psi_G$ we have $x\notin M_\rho(\Psi_G)$, that is, $\Psi_G|_{S^{m-1}_{\rho,\varepsilon}}$ is a submersion at $x$.

(ii) \emph{$\Psi_G|_{\partial W}$ is a submersion.} This does \emph{not} follow from (i): the tangent space $T_x\partial W$ is a hyperplane in $T_xS^{m-1}_{\rho,\varepsilon}$, and surjectivity of $d\Psi_G$ on the larger space does not imply surjectivity on the hyperplane. It is, however, exactly what \eqref{eq:boundarypart} provides. Indeed, the domain of \eqref{eq:boundarypart} is precisely $\partial W$, and \eqref{eq:boundarypart} has just been shown to be a locally trivial fibration; in particular it is a submersion at every point of $\partial W$.

Since $W$ is compact, \eqref{eq:outerpart} is proper, and Ehresmann's theorem for manifolds with boundary makes it a locally trivial fibration. Finally, \eqref{eq:innerpart} and \eqref{eq:outerpart} are locally trivial fibrations over $S^{p-1}$ whose restrictions to their common boundary both coincide with \eqref{eq:boundarypart}; each admits a collar neighbourhood of that boundary compatible with the projection to $S^{p-1}$, so the two may be glued smoothly along collars, exactly as in the proof of \cite[Theorem 1.3]{ACT13}, yielding the locally trivial fibration \eqref{eq:spherefib}.
\end{proof}

\begin{remark}
Independence of the equivalence class of \eqref{eq:spherefib} from $\varepsilon$ can be obtained by the isotopy arguments of \cite{ACT13,ARSRT20}; we will not need it, since our non-existence result below rules out local triviality for \emph{every} radius separately.
\end{remark}

For the quadratic family, the projection is automatically regular with respect to the germ's own control function.

\begin{lemma}\label{lem:autoangular}
Let $\rho=ax^2+by^2+cz^2$ with $a,b,c>0$ and $G_\rho=(xz,\,yz\rho)$. Then $M_\rho(\Psi_{G_\rho})=\emptyset$ off $V_{G_\rho}$.
\end{lemma}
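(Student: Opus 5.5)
By Lemma~\ref{lem:p2}, it suffices to show that $X\times\nabla\rho$ never vanishes off $V_{G_\rho}$, where $X=G_1\nabla G_2-G_2\nabla G_1$. The strategy is to compute $X$ explicitly, factor out a multiple of $\nabla\rho$, and observe that only one simple vector survives in the cross product.

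\emph{Step 1: compute $X$.} We have $\nabla G_1=(z,0,x)$ and, as in the proof of Proposition~\ref{prop:family}, $\nabla G_2=yz\,\nabla\rho+(0,\,z\rho,\,y\rho)$. Then
\[
X=xz\bigl[yz\nabla\rho+(0,z\rho,y\rho)\bigr]-yz\rho\,(z,0,x)
=xyz^2\,\nabla\rho+\rho z^2\,(-y,\,x,\,0),
\]
because the third components $xyz\rho-xyz\rho$ cancel. This cancellation is the only point needing care, and I will recheck it when writing the proof.

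\emph{Step 2: the cross product.} The term parallel to $\nabla\rho$ drops out, so
\[
X\times\nabla\rho=\rho z^2\,(-y,x,0)\times\nabla\rho .
\]
With $\nabla\rho=(2ax,2by,2cz)$ this is a direct expansion:
\[
(-y,x,0)\times(2ax,2by,2cz)=\bigl(2cxz,\ 2cyz,\ -2(ax^2+by^2)\bigr).
\]

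\emph{Step 3: nonvanishing off $V_{G_\rho}$.} Recall $V_{G_\rho}=\{z=0\}\cup\{x=y=0\}$. A point off $V_{G_\rho}$ therefore has $z\neq0$ and $(x,y)\neq(0,0)$.
\begin{itemize}
\item Since $z\neq0$ and the point is not the origin, $\rho z^2>0$.
\item Since $a,b>0$ and $(x,y)\neq(0,0)$, the third component $-2(ax^2+by^2)$ is strictly negative.
\end{itemize}
Hence $X\times\nabla\rho\neq0$, and Lemma~\ref{lem:p2} gives $x\notin M_\rho(\Psi_{G_\rho})$.

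There is no real obstacle. The computation in Step~1 carries the content: the $\nabla\rho$-component of $X$ is invisible to the cross product, and positivity of $a,b$ controls the rest.
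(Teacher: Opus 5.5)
Your proposal is correct and follows essentially the same route as the paper. Both arguments split $X=xyz^2\,\nabla\rho+\rho z^2(-y,x,0)$ so that the $\nabla\rho$-term drops out of the cross product, then use the third component $-2\rho z^2(ax^2+by^2)\neq0$ off $V_{G_\rho}$ and conclude via Lemma~\ref{lem:p2}.
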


\begin{proof}
Write $F_1=xz$, $F_2=yz$, so that $G_\rho=(F_1,F_2\rho)$ and
\[
X=G_1\nabla G_2-G_2\nabla G_1=\rho\bigl(F_1\nabla F_2-F_2\nabla F_1\bigr)+F_1F_2\nabla\rho=\rho\,Y+F_1F_2\nabla\rho,
\]
with $Y:=F_1\nabla F_2-F_2\nabla F_1=xz(0,z,y)-yz(z,0,x)=z^2(-y,x,0)$. Hence $X\times\nabla\rho=\rho\,(Y\times\nabla\rho)$, and with $\nabla\rho=(2ax,2by,2cz)$ a direct computation gives
\[
Y\times\nabla\rho=2z^2\bigl(cxz,\ cyz,\ -(ax^2+by^2)\bigr).
\]
For a point off $V_{G_\rho}$ (so $z\ne0$ and $(x,y)\ne(0,0)$) and with $\rho>0$, the vanishing of $X\times\nabla\rho$ would force, from the third component, $ax^2+by^2=0$, i.e.\ $x=y=0$---a contradiction. Lemma \ref{lem:p2} concludes.
\end{proof}

\begin{corollary}\label{cor:spherefamily}
For all $a,b,c>0$, the germ $G_\rho=(xz,\,yz(ax^2+by^2+cz^2))$ admits a Milnor fibration on the $\rho$-sphere, $\rho=ax^2+by^2+cz^2$, as well as on the $\rho$-tube.
\end{corollary}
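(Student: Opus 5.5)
The plan is to assemble the corollary from results already established, treating the tube and sphere statements separately.

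\emph{Tube.} The tube fibration is Proposition \ref{prop:family} specialised to $m=n=k=1$. That proposition gives three facts: $G_\rho$ has isolated critical value, with $\Sing G_\rho=\{z=0\}\subset V_{G_\rho}$; it satisfies the Milnor condition (b) associated with $\rho$; and it therefore admits a Milnor fibration on the $\rho$-tube via Corollary \ref{cor:tube}. We also record for later that $\rho$ is a positive definite quadratic form. It is thus homogeneous and positive off the origin, so by Euler's identity it is an admissible real analytic control function.

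\emph{Sphere.} Here I would apply Theorem \ref{thm:sphere} with $m=3$ and $p=2$, so that $m>p\ge2$ holds. The hypotheses are checked one at a time.
\begin{enumerate}
\item[(1)] $\Disc(G_\rho)\subseteq\{0\}$ holds by Proposition \ref{prop:family}.
\item[(2)] $\dim V_{G_\rho}>0$ holds because $V_{G_\rho}=\{z=0\}\cup\{x=y=0\}$ contains the plane $\{z=0\}$.
\item[(3)] Condition (b) associated with $\rho$ holds, again by Proposition \ref{prop:family}.
\item[(4)] $\Psi_{G_\rho}$ is $\rho$-regular. Lemma \ref{lem:autoangular} gives $M_\rho(\Psi_{G_\rho})=\emptyset$ off $V_{G_\rho}$. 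Since $M_\rho(\Psi_G)$ is by definition a subset of the complement of $V_G$, it is empty outright. Hence $M_\rho(\Psi_{G_\rho})\cap B^3_{\rho,\varepsilon_0}=\emptyset$ for any $\varepsilon_0$, which is exactly $\rho$-regularity in the sense of Definition \ref{def:angularmilnor}.
\end{enumerate}
The direction ($\Rightarrow$) of Theorem \ref{thm:sphere} then shows that \eqref{eq:spherefib} is a smooth locally trivial fibration for every sufficiently small $\varepsilon$. This is the Milnor fibration on the $\rho$-sphere.

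\emph{Main obstacle.} There is no real obstacle, since every hypothesis has already been verified in the paper. The one point needing care is that Lemma \ref{lem:autoangular} relies on Lemma \ref{lem:p2}. That lemma defines $\theta$ only locally, but it characterises criticality pointwise through $X\times\nabla\rho$, and $X$ is globally defined. The emptiness of $M_\rho(\Psi_{G_\rho})$ is therefore a global statement on $U\setminus V_{G_\rho}$, and it holds uniformly for all levels $\varepsilon$.
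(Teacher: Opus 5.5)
Your proposal is correct and follows essentially the same route as the paper. The paper's proof likewise takes the isolated critical value and condition (b) from Proposition \ref{prop:family}, notes $\dim V_{G_\rho}=2>0$, and takes the $\rho$-regularity of $\Psi_{G_\rho}$ from Lemma \ref{lem:autoangular}; it then concludes with Theorem \ref{thm:sphere} for the sphere and Corollary \ref{cor:tube} for the tube.
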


\begin{proof}
By Proposition \ref{prop:family}, $G_\rho$ has isolated critical value and satisfies condition (b) associated with $\rho$; moreover $\dim V_{G_\rho}=2>0$. By Lemma \ref{lem:autoangular}, $\Psi_{G_\rho}$ is $\rho$-regular. Theorem \ref{thm:sphere} and Corollary \ref{cor:tube} conclude.
\end{proof}

\section{The main example}\label{sec:main}

We now turn to the explicit germ described in the introduction. The key is to determine exactly when the projection $\Psi_{G_\rho}$ of a member of the quadratic family acquires critical points along \emph{Euclidean} spheres.

\begin{lemma}\label{lem:angularcrit}
Let $\rho=ax^2+by^2+cz^2$ with $a,b,c>0$, let $G_\rho=(xz,\,yz\rho)$, and let $\rho_E=x^2+y^2+z^2$. With $X=G_1\nabla G_2-G_2\nabla G_1$, one has
\[
X\times\nabla\rho_E=\Bigl(2xz^3\bigl[\rho+2(b-c)y^2\bigr],\ \ 2yz^3\bigl[\rho+2(c-a)x^2\bigr],\ \ 2z^2\bigl[2(a-b)x^2y^2-\rho\,(x^2+y^2)\bigr]\Bigr).
\]
Moreover:
\begin{enumerate}
\item[(i)] $M_{\rho_E}(\Psi_{G_\rho})\setminus V_{G_\rho}\ne\emptyset$ if and only if
\[
c-b>0,\qquad a-c>0,\qquad \Delta:=(a-2c)(c-b)-b(a-c)>0 .
\]
\item[(ii)] In that case, $M_{\rho_E}(\Psi_{G_\rho})\setminus V_{G_\rho}$ is the punctured cone
\[
y^2=\frac{a-c}{c-b}\,x^2,\qquad z^2=\frac{\Delta}{c(c-b)}\,x^2,\qquad x\ne0,
\]
a union of four punctured lines through the origin. Every point of this cone with $z\ne0$ is a critical point of $\Psi_{G_\rho}$ restricted to the Euclidean sphere through it.
\end{enumerate}
\end{lemma}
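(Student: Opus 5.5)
The plan is to reuse the decomposition from the proof of Lemma \ref{lem:autoangular} rather than expand the $3\times3$ cross product directly. With $F_1=xz$, $F_2=yz$ we have $X=\rho\,Y+F_1F_2\nabla\rho$ and $Y=z^2(-y,x,0)$. Since $\nabla\rho_E=2(x,y,z)$ is no longer parallel to $\nabla\rho$, the second term survives, and
\[
X\times\nabla\rho_E=\rho\,(Y\times\nabla\rho_E)+xyz^2\,(\nabla\rho\times\nabla\rho_E).
\]
Two short computations give
\[
Y\times\nabla\rho_E=2z^2\bigl(xz,\,yz,\,-(x^2+y^2)\bigr),\qquad
\nabla\rho\times\nabla\rho_E=4\bigl((b-c)yz,\,(c-a)xz,\,(a-b)xy\bigr).
\]
Adding these componentwise yields the displayed formula; for instance, the first component is $2xz^3\rho+4(b-c)xy^2z^3$.

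For (i) and (ii), I will work off $V_{G_\rho}$, that is, with $z\neq0$ and $(x,y)\neq(0,0)$, and invoke Lemma \ref{lem:p2}: the point is critical exactly when all three components vanish. The analysis proceeds in three steps.

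\emph{Step 1: all coordinates are nonzero.} If $x=0$, the third component reduces to $-2z^2\rho y^2$. This is nonzero because $y\neq0$ and $\rho>0$. Symmetrically, $y=0$ is excluded. Hence $x,y,z$ are all nonzero at any critical point.

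\emph{Step 2: reduce to two equations.} Dividing out the nonzero prefactors, the first two components give
\[
\rho=2(c-b)y^2,\qquad \rho=2(a-c)x^2 .
\]
Positivity of $\rho$ forces $c>b$ and $a>c$, and equating the two expressions gives $y^2=\frac{a-c}{c-b}x^2$. I will check that the third equation, $2(a-b)x^2y^2=\rho(x^2+y^2)$, becomes $(c-b)y^2=(a-c)x^2$ after substituting $\rho=2(a-c)x^2$, so it is automatically satisfied.

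\emph{Step 3: solve for $z$.} Substituting into $\rho=2(a-c)x^2$ gives
\[
cz^2=(a-2c)x^2-by^2=\frac{\Delta}{c-b}\,x^2 .
\]
A solution with $z\neq0$ therefore exists if and only if $\Delta>0$.

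For the converse direction, assume the three inequalities. Choose any $x\neq0$, and define $y$ and $z$ by the two formulas above. Both are nonzero, the point lies off $V_{G_\rho}$, and the steps reverse, so the point is critical. The independent sign choices for $y$ and $z$ produce the four punctured lines. The final sentence of (ii) is then exactly Lemma \ref{lem:p2} applied with $\rho_E$ in place of $\rho$.

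The only delicate step is the algebraic bookkeeping in the displayed formula, particularly the signs in $\nabla\rho\times\nabla\rho_E$, and the verification that the third equation is redundant. I would double-check both by specializing to $a=b=c$, where $\nabla\rho\parallel\nabla\rho_E$ and the formula must collapse to $\rho\,(Y\times\nabla\rho_E)$.
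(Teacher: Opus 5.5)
Your proposal is correct and follows essentially the paper's route. Both use the decomposition $X=\rho Y+F_1F_2\nabla\rho$ from Lemma~\ref{lem:autoangular} and a direct cross product. Then the first two components give $\rho=2(c-b)y^2=2(a-c)x^2$, the third component is redundant, $cz^2=\Delta x^2/(c-b)$ decides existence, and Lemma~\ref{lem:p2} gives the final claim. The only cosmetic difference is that you rule out $x=0$ or $y=0$ with the third component, whereas the paper uses the second (resp.\ first) component; both arguments are valid.
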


\begin{proof}
The displayed formula for $X\times\nabla\rho_E$ follows from $X=z^2\bigl[2xy(ax,by,cz)+\rho(-y,x,0)\bigr]$ (as in Lemma \ref{lem:autoangular}) and a direct cross-product computation with $\nabla\rho_E=2(x,y,z)$, which we have also verified independently by symbolic computation.

Points off $V_{G_\rho}$ have $z\ne0$ and $(x,y)\ne(0,0)$. If $x=0$ and $y\ne0$, the second component reduces to $2yz^3\rho\ne0$; if $y=0$ and $x\ne0$, the first reduces to $2xz^3\rho\ne0$. Hence critical points off $V_{G_\rho}$ must have $xyz\ne0$, and the vanishing of the first two components is equivalent to
\begin{equation}\label{eq:angsys}
\rho=2(c-b)\,y^2,\qquad \rho=2(a-c)\,x^2.
\end{equation}
Since $\rho>0$ off the origin and $x,y\ne0$, the two equations in \eqref{eq:angsys} read $c-b=\rho/(2y^2)$ and $a-c=\rho/(2x^2)$, and therefore force
\[
c-b>0\qquad\text{and}\qquad a-c>0
\]
outright; no other sign pattern is possible. Solving the linear system \eqref{eq:angsys} together with $\rho=ax^2+by^2+cz^2$ yields
\[
y^2=\frac{a-c}{c-b}\,x^2,\qquad cz^2=\frac{(a-2c)(c-b)-b(a-c)}{c-b}\,x^2=\frac{\Delta}{c-b}\,x^2,
\]
and existence of a solution with $z\ne0$ requires precisely $\Delta/(c-b)>0$, i.e.\ the sign condition of (i). It remains to check that on this locus the third component of $X\times\nabla\rho_E$ vanishes automatically: substituting $\rho=2(a-c)x^2$ and $y^2=\frac{a-c}{c-b}x^2$ into $2(a-b)x^2y^2-\rho(x^2+y^2)$ gives
\[
2(a-c)x^4\left[\frac{a-b}{c-b}-1-\frac{a-c}{c-b}\right]
=2(a-c)x^4\cdot\frac{(a-b)-(c-b)-(a-c)}{c-b}=0.
\]
 Conversely, if the sign condition fails, the system \eqref{eq:angsys} has no solution with $xyz\ne0$, so $M_{\rho_E}(\Psi_{G_\rho})\setminus V_{G_\rho}=\emptyset$. The last claim of (ii) is Lemma \ref{lem:p2}.
\end{proof}

\begin{remark}\label{rem:nomirror}
One might expect the criterion of Lemma \ref{lem:angularcrit}(i) to be stated symmetrically, as ``$c-b$, $a-c$ and $\Delta$ are nonzero and share a common sign''. That formulation is in fact equivalent, because for $a,b,c>0$ the all-negative pattern never occurs: if $c-b<0$ and $a-c<0$ then $a<c<b$, so $a-2c<-c<0$, whence $(a-2c)(c-b)>0$ and $-b(a-c)>0$, and therefore $\Delta>0$. We state the criterion in the explicit positive form, which is what all the applications below use.
\end{remark}

\begin{theorem}\label{thm:main}
Let
\[
G(x,y,z)=\bigl(xz,\;yz\,(10x^2+y^2+3z^2)\bigr),\qquad \rho(x,y,z)=10x^2+y^2+3z^2.
\]
Then:
\begin{enumerate}
\item[(i)] $G$ admits no Milnor fibration on the Euclidean tube, and no Milnor fibration on the Euclidean sphere, for any radius $\varepsilon>0$. More precisely, for every $\varepsilon>0$ the projection $\Psi_G$ has exactly eight critical points on $S^2_{\rho_E,\varepsilon}\setminus K_{\rho_E,\varepsilon}$, namely the intersections of the sphere with the four lines spanned by $(\pm\sqrt6,\pm\sqrt{21},1)$.
\item[(ii)] $G$ admits both the Milnor fibration on the $\rho$-tube and the Milnor fibration on the $\rho$-sphere.
\end{enumerate}
\end{theorem}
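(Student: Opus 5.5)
The whole statement is the specialisation $a=10$, $b=1$, $c=3$ of the quadratic family $G_\rho=(xz,\,yz\rho)$. The plan is to read off every claim from results already established, with no new analysis. Throughout, $\rho_1=\rho=10x^2+y^2+3z^2$.

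\emph{Euclidean tube.} Apply Theorem \ref{thm:criterion} with $\rho_1=\rho$, so $(a_1,b_1,c_1)=(10,1,3)$, and $\rho_2=\rho_E$, so $(a_2,b_2,c_2)=(1,1,1)$. Then $\lambda=a_1b_2/(a_2b_1)=10>9$, and part (ii) of that theorem says condition (b) associated with $\rho_E$ fails. Since $G_1$, $G_2$ and $\rho_E$ are homogeneous, Lemma \ref{lem:conic} excludes the Milnor fibration on the Euclidean tube for every $\varepsilon,\eta$. This is already contained in Step 9 of that proof.

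\emph{Euclidean sphere.} Apply Lemma \ref{lem:angularcrit} with $(a,b,c)=(10,1,3)$.
\begin{itemize}
\item The sign conditions hold: $c-b=2>0$ and $a-c=7>0$.
\item $\Delta=(a-2c)(c-b)-b(a-c)=4\cdot2-7=1>0$.
\item By part (ii), $M_{\rho_E}(\Psi_G)\setminus V_G$ is the punctured cone $y^2=\tfrac72x^2$, $z^2=\tfrac{1}{6}x^2$ with $x\neq0$.
\item Normalising $z=1$ gives $x^2=6$ and $y^2=21$. So the cone is the union of the four punctured lines spanned by $(\pm\sqrt6,\pm\sqrt{21},1)$.
\end{itemize}
Each line meets $S^2_{\rho_E,\varepsilon}$ in exactly two points. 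These eight points have $xz\neq0$, so they lie off $V_G$ and hence in $S^2_{\rho_E,\varepsilon}\setminus K_{\rho_E,\varepsilon}$. By the last sentence of Lemma \ref{lem:angularcrit}(ii) together with Lemma \ref{lem:p2}, they are critical points of $\Psi_G|_{S^2_{\rho_E,\varepsilon}}$. Because the lemma describes the whole Milnor set, there are no other critical points, which gives the count ``exactly eight''.

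A smooth locally trivial fibration is a submersion. Hence \eqref{eq:spherefib} cannot be one for any $\varepsilon>0$, and $G$ has no Milnor fibration on the Euclidean sphere. Here I would stress that Theorem \ref{thm:sphere} is not invoked, since its hypothesis (b) fails for $\rho_E$. Only the trivial direction, fibration implies submersion, is used, and it is applied radius by radius.

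\emph{Adapted control.} Part (ii) is Corollary \ref{cor:spherefamily} with $a=10$, $b=1$, $c=3$. Unpacked, this uses the following results.
\begin{itemize}
\item Proposition \ref{prop:family} (with $m=n=k=1$) gives the isolated critical value and condition (b) for $\rho$, hence the $\rho$-tube fibration by Corollary \ref{cor:tube}.
\item Lemma \ref{lem:autoangular} gives $\rho$-regularity of $\Psi_G$.
\item $\dim V_G=2>0$.
\item Theorem \ref{thm:sphere} then yields the $\rho$-sphere fibration.
\end{itemize}

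The only step needing care is the sphere part of (i). There one must check three things:
\begin{itemize}
\item the arithmetic $\Delta=1$ and the resulting direction vectors;
\item that the eight points really avoid the link $K_{\rho_E,\varepsilon}$;
\item that the obstruction argument is the elementary submersion one rather than Theorem \ref{thm:sphere}.
\end{itemize}
I expect no genuine obstacle beyond this bookkeeping.
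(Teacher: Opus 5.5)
Your proposal is correct and matches the paper's proof essentially step for step. The tube obstruction comes from Theorem~\ref{thm:criterion}(ii) with $\lambda=10>9$ together with Lemma~\ref{lem:conic}. The sphere obstruction comes from Lemma~\ref{lem:angularcrit} with $\Delta=1$, the ``only if'' half giving the exact count of eight, plus the elementary fact that a fibration is a submersion (not Theorem~\ref{thm:sphere}), and part (ii) is Corollary~\ref{cor:spherefamily}.
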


\begin{proof}
Here $(a,b,c)=(10,1,3)$.

\emph{(i), the tube.} In the notation of Theorem \ref{thm:criterion}, take $\rho_1=\rho$ (so $a_1=10$, $b_1=1$, $c_1=3$) and $\rho_2=\rho_E$ (so $a_2=b_2=c_2=1$). Then $\lambda=a_1b_2/(a_2b_1)=10>9$, and Theorem \ref{thm:criterion}(ii) shows both that condition (b) fails for $\rho_E$ and that $G$ admits no Milnor fibration on the Euclidean tube.

\emph{(i), the sphere.} We have
\[
c-b=2>0,\qquad a-c=7>0,\qquad \Delta=(a-2c)(c-b)-b(a-c)=4\cdot2-7=1>0,
\]
so the sign condition of Lemma \ref{lem:angularcrit}(i) holds, and the critical cone of Lemma \ref{lem:angularcrit}(ii) is
\[
y^2=\frac72\,x^2,\qquad z^2=\frac{1}{3\cdot2}\,x^2=\frac{x^2}{6},
\]
i.e.\ the four lines spanned by $(\pm\sqrt6,\pm\sqrt{21},1)$ (normalizing $z=1$: $x^2=6$, $y^2=21$). Fix $\varepsilon>0$ and let $t=\sqrt{\varepsilon/28}$, so that the eight points $p=\pm t(\pm\sqrt6,\pm\sqrt{21},1)$ satisfy $\rho_E(p)=t^2(6+21+1)=\varepsilon$, hence lie on $S^2_{\rho_E,\varepsilon}$. Each such $p$ has $z\ne0$ and $x\ne0$, so $p\notin V_G$, i.e.\ $p\in S^2_{\rho_E,\varepsilon}\setminus K_{\rho_E,\varepsilon}$, and by Lemma \ref{lem:angularcrit}(ii) it is a critical point of $\Psi_G|_{S^2_{\rho_E,\varepsilon}}$. If \eqref{eq:spherefib} (with $\rho=\rho_E$) were a smooth locally trivial fibration, it would be a submersion at every point of its domain, contradicting the existence of $p$. Since $\varepsilon>0$ was arbitrary, no Milnor fibration on the Euclidean sphere exists for any radius. (That these eight points exhaust the critical set on the sphere is the ``only if'' half of Lemma \ref{lem:angularcrit}(i)--(ii).)

\emph{(ii).} This is Corollary \ref{cor:spherefamily} with $(a,b,c)=(10,1,3)$.
\end{proof}

\begin{remark}\label{rem:familyofexamples}
The phenomenon is stable: by Lemma \ref{lem:angularcrit}, every choice of coefficients with $c>b$, $a>c$ and $(a-2c)(c-b)>b(a-c)$ produces a germ $G_\rho=(xz,yz(ax^2+by^2+cz^2))$ whose projection has critical points on every Euclidean sphere---hence no Euclidean sphere fibration---while Corollary \ref{cor:spherefamily} guarantees the fibration on the corresponding $\rho$-sphere. For instance $(a,b,c)=(20,1,4)$ also works ($\Delta=12\cdot3-16=20>0$), whereas $(a,b,c)=(5,1,2)$ does not ($\Delta=1\cdot1-3=-2$, of mixed sign with $c-b=1>0$).
\end{remark}

\begin{remark}\label{rem:whyten}
The coefficient pattern matters in an essential way. For $b=c$ (in particular for the germ $(xz,yz(9x^2+y^2+z^2))$ of Example \ref{ex:nine}), the first component of $X\times\nabla\rho_E$ in Lemma \ref{lem:angularcrit} reduces to $2xz^3\rho$, which forces $x=0$ and then $y=0$: the projection $\Psi_G$ is \emph{submersive} along every Euclidean sphere off the link. For such germs, the failure of condition (b) on the Euclidean tube (Remark \ref{rem:threshold}) does not by itself decide the existence of the Euclidean sphere fibration; it only makes the sufficient criterion of Theorem \ref{thm:sphere} inapplicable. The germ of Theorem \ref{thm:main} is designed precisely so that the angular obstruction is genuine: taking $c$ strictly between $b$ and $a$, with $\Delta>0$, creates the critical cone that rules out the Euclidean sphere fibration outright, while the intrinsic control function $\rho$ removes both the tube and the angular obstructions simultaneously.
\end{remark}

\subsection*{Acknowledgements}
M.~Ribeiro acknowledges financial support from the CNPq-Universal project, MCTI-Grant No.~408147/2023-7, and from FAPES-Grant No.~1021/2025-P:~2025-S37BJ and FAPES-Grant No.~1533/2025-P:~2025-ZTMZN. G.~Fran\c{c}a acknowledges financial support from CAPES-Grant  and FAPES-Grant No.~1533/2025-P:~2025-ZTMZN.


\begin{thebibliography}{99}

\bibitem{ACT13} R.~N.~Ara\'ujo dos Santos, Y.~Chen, M.~Tib\u{a}r, \emph{Singular open book structures from real mappings}, Cent. Eur. J. Math. \textbf{11} (2013), 817--828.

\bibitem{ART10} R.~N.~Ara\'ujo dos Santos, M.~Tib\u{a}r, \emph{Real map germs and higher open book structures}, Geom. Dedicata \textbf{147} (2010), 177--185.

\bibitem{ARSRT19} R.~N.~Ara\'ujo dos Santos, M.~F.~Ribeiro, M.~Tib\u{a}r, \emph{Fibrations of highly singular map germs}, Bull. Sci. Math. \textbf{155} (2019), 92--111.

\bibitem{ARSRT20} R.~N.~Ara\'ujo dos Santos, M.~F.~Ribeiro, M.~Tib\u{a}r, \emph{Milnor--Hamm sphere fibrations and the equivalence problem}, J. Math. Soc. Japan \textbf{72} (2020), 945--957.

\bibitem{Bek91} K.~Bekka, \emph{$(c)$-r\'egularit\'e et trivialit\'e topologique}, in: Singularity Theory and its Applications, Part I, Lecture Notes in Math. 1462, Springer, Berlin, 1991, pp. 42--62.

\bibitem{CMSS09} J.~L.~Cisneros-Molina, J.~Seade, J.~Snoussi, \emph{Refinements of Milnor's fibration theorem for complex singularities}, Adv. Math. \textbf{222} (2009), 937--970.

\bibitem{CSS10} J.~L.~Cisneros-Molina, J.~Seade, J.~Snoussi, \emph{Milnor fibrations and $d$-regularity for real analytic singularities}, Internat. J. Math. \textbf{21} (2010), 419--434.

\bibitem{CSS12} J.~L.~Cisneros-Molina, J.~Seade, J.~Snoussi, \emph{Milnor fibrations and the concept of $d$-regularity for analytic map germs}, Contemp. Math. \textbf{569} (2012), 1--28.

\bibitem{Le77} L\^e D\~ung Tr\'ang, \emph{Some remarks on relative monodromy}, in: Real and Complex Singularities (Oslo, 1976), Sijthoff and Noordhoff, 1977, pp. 397--403.

\bibitem{Loo84} E.~Looijenga, \emph{Isolated Singular Points on Complete Intersections}, London Math. Soc. Lecture Note Ser. 77, Cambridge Univ. Press, 1984.

\bibitem{Massey10} D.~B.~Massey, \emph{Real analytic Milnor fibrations and a strong \L ojasiewicz inequality}, in: Real and Complex Singularities, London Math. Soc. Lecture Note Ser. 380, Cambridge Univ. Press, 2010, pp. 268--292.

\bibitem{Mather12} J.~Mather, \emph{Notes on topological stability}, Bull. Amer. Math. Soc. \textbf{49} (2012), 475--506.

\bibitem{Mil68} J.~W.~Milnor, \emph{Singular Points of Complex Hypersurfaces}, Ann. of Math. Stud. 61, Princeton Univ. Press, 1968.

\bibitem{Pichon05} A.~Pichon, \emph{Real analytic germs $f\bar g$ and open-book decompositions of the $3$-sphere}, Internat. J. Math. \textbf{16} (2005), 1--12.

\bibitem{PichonSeade08} A.~Pichon, J.~Seade, \emph{Fibred multilinks and singularities $f\bar g$}, Math. Ann. \textbf{342} (2008), 487--514.

\bibitem{Quick22} R.~N.~Ara\'ujo dos Santos, D.~Dreibelbis, A.~A.~do Esp\'irito Santo, M.~F.~Ribeiro, \emph{A quick trip through fibration structures}, J. Singul. \textbf{22} (2022), 134--158.

\bibitem{RS05} M.~A.~S.~Ruas, R.~N.~Ara\'ujo dos Santos, \emph{Real Milnor fibrations and $(C)$-regularity}, Manuscripta Math. \textbf{117} (2005), 207--218.

\bibitem{RST24} M.~Ribeiro, I.~Santamaria, T.~da Silva, \emph{Some remarks about $\rho$-regularity for real analytic maps}, Res. Math. Sci. \textbf{11} (2024), Article 40.

\bibitem{RSV02} M.~A.~S.~Ruas, J.~Seade, A.~Verjovsky, \emph{On real singularities with a Milnor fibration}, in: Trends in Mathematics, Birkh\"auser, Basel, 2002, pp. 191--213.

\bibitem{Seade97} J.~Seade, \emph{Open book decompositions associated to holomorphic vector fields}, Bol. Soc. Mat. Mexicana \textbf{3} (1997), 323--336.

\bibitem{Seade19} J.~Seade, \emph{On Milnor's fibration theorem and its offspring after 50 years}, Bull. Amer. Math. Soc. \textbf{56} (2019), 281--348.

\bibitem{Thom69} R.~Thom, \emph{Ensembles et morphismes stratifi\'es}, Bull. Amer. Math. Soc. \textbf{75} (1969), 240--284.

\end{thebibliography}
\end{document}